\documentclass[10pt]{article}

\usepackage[top=1in,bottom=1in,left=1.25in,right=1.25in]{geometry}

\usepackage[absolute,overlay]{textpos}

\usepackage{graphicx}
\usepackage{ragged2e}
\usepackage{tabto}

\usepackage{amsmath}
\usepackage{amsthm}
\usepackage{amsfonts}
\usepackage{amssymb}

\usepackage{float}
\usepackage{multirow}
\usepackage{tikz}
\usepackage{graphics}
\usepackage{color}
\usepackage{url}
\usepackage{afterpage} 
\usepackage{multicol}
\usepackage[font=footnotesize]{caption}
\usepackage[font=footnotesize]{subcaption}
\usepackage{marginnote}
\usepackage{tcolorbox}
\usepackage[linesnumbered,boxed,algoruled,noline,noend]{algorithm2e}

\usepackage{lmodern}

\usepackage[sf,bf,medium]{titlesec}
\usepackage{pgfplots}

\pgfplotsset{compat=1.18}

\usepackage{bm}
\usepackage{booktabs}
\usepackage{siunitx}
\usepackage{isomath}

\usepackage[plainpages=false, colorlinks=true, citecolor=blue, filecolor=blue,
   linkcolor=blue, urlcolor=blue]{hyperref}
\usepackage{cleveref}

\usepackage{multicol}
\newcommand{\opnm}{\operatorname}

\newcommand{\lp}{\left(}
\newcommand{\rp}{\right)}

\usepackage[numbers]{natbib}

\newcommand\bbR{\mathbb R}
\newcommand\bbC{\mathbb C}

\newcommand\bx{\boldsymbol{x}}

\newcommand\by{\boldsymbol{y}}
\newcommand\bz{\boldsymbol{z}}

\newcommand\bA{\boldsymbol A}
\newcommand\bD{\boldsymbol D}

\newcommand\bK{\boldsymbol K}
\newcommand\bS{\boldsymbol S}

\newcommand\bp{\boldsymbol p}

\newcommand\bn{\boldsymbol n}

\newcommand\bu{\boldsymbol u}
\newcommand\bv{\boldsymbol v}

\newtheorem{theorem}{\sffamily Theorem}
\newtheorem{remark}{\sffamily Remark}

\newcommand{\cN}{\mathcal N}
\newcommand{\cW}{\mathcal W}

\newcommand{\cF}{\mathcal F}

\newcommand{\cI}{\mathcal I}
\newcommand{\cJ}{\mathcal J}

\newcommand{\cB}{\mathcal B}

\numberwithin{equation}{section}

\newcommand{\bs}{\boldsymbol}

\newcommand{\bsigma}{\bs\sigma}

\newcommand{\figref}[1]{\figurename~\hyperref[#1]{\ref{#1}}}
\newcommand{\tableref}[1]{\tablename~\hyperref[#1]{\ref{#1}}}

\renewcommand{\phi}{\varphi}

\binoppenalty=\maxdimen
\relpenalty=\maxdimen

\usepackage{setspace}
\usetikzlibrary{calc, decorations.markings, arrows.meta}
\usepackage{abstract}

\newcommand{\npts}{N_{\opnm{pts}}}

\newcommand{\nspread}{n_{\opnm{spread}}}
\newcommand{\nshell}{n_{\opnm{shell}}}
\newcommand{\nproxy}{n_{\opnm{proxy}}}
\newcommand{\npxy}{\nproxy}
\newcommand{\nnbr}{n_{\opnm{nbr}}}

\begin{document}

\newlength{\leggap}\setlength{\leggap}{0.1em}
\newlength{\capgap}\setlength{\capgap}{-1.1em}

{\centering {\Large 
A kernel-independent precorrected-FFT method
\\}
}

\begin{center}

 Tristan Goodwill\footnote{Department of Mathematics, University of Florida, Gainesville, FL
  (\texttt{tgoodwill@ufl.edu})} and Owen Melia\footnote{Center for Computational Mathematics, Flatiron Institute, New York, NY
  (\texttt{omelia@flatironinstitute.org})}\\
\end{center}

\begin{abstract}
The precorrected-FFT has been used to rapidly evaluate $N$-body sums arising in electromagnetics and acoustics settings. It has, however, been historically limited to kernels that satisfy a Green's identity. 
In this work, we demonstrate that, using an improved version of the method of proxy points, the precorrected-FFT can be used to efficiently compute $N$-body sums with a wide variety of kernels. This algorithm requires no analytic information beyond pointwise access to the kernel.
Our examples include kernels from non-local PDEs, Mat\'ern kernels, anisotropic kernels, and the Green's functions of high order PDEs.
We also provide a detailed algorithm for choosing the proxy points and spreading grid, as implemented in the new \texttt{pcfft} package.
We show these algorithmic choices result in favorable scaling when compared to other kernel-independent fast summation methods.
Finally, we use the algorithm to accelerate a wide variety of integral equations solvers involving non-standard kernels, with applications in surface scattering, geophysics, and microbial community formation.
\end{abstract}
  \noindent {\bfseries Keywords}: 
  Precorrected FFT, N-Body sums, Proxy shells
  
  \noindent {\bfseries AMS subject classifications}: 65R20, 65D30, 65F55

\section{Introduction}
\label{sec:intro}
In this work, we describe how the precorrected-FFT method~\cite{phillips_error_1996,phillips2002precorrected} can be used to compute $N$-body sums of the form
\begin{equation}\label{eq:Nbody}
    u(\bx_i) = \sum_{\by_j \neq \bx_i} K(\bx_i-\by_j) \boldsymbol{\sigma}_j
\end{equation}
for a very general class of kernels. Here, the kernel $K: \bbR^d \to \bbC$ is a smooth function away from the origin, and the source $\by_j$ and target $\bx_i$ points are general point clouds in $\bbR^d$, where the ambient dimension $d$ is $2$ or $3$.
$N$-body sums are ubiquitous in scientific computing, appearing in the numerical discretization of integral equation formulation of partial differential equations (PDEs) \cite{kress_linear_1999,nedelec_acoustic_2001,oneil_second-kind_2018-1,goodwillParametrixMethodElliptic2025}, Gaussian processes \cite{rasmussen_gaussian_2005,ambikasaranFastDirectMethods2016,mindenFastSpatialGaussian2017,wilsonKernelInterpolationScalable2015}, spatial statistics \cite{steinInterpolationSpatialData1999,chen_fast_2014}, molecular dynamics \cite{liangAcceleratingFastEwald2025,bostrom_fast_2026}, and many other settings. 
These applications often require repeated calculation of \cref{eq:Nbody}, for very large $N$, which denotes the number of source points $\{\by_j\}_{j=1,...,N}$ and target points $\{ \bx_i\}_{i=1,...,N}$. 
A na\"ive method of evaluating \cref{eq:Nbody} requires $O(N^2)$ work, while ``fast algorithms'' for this problem, our method included, approximate the sum using a method requiring $O(N)$ or $O(N\log(N))$ work. It is a nontrivial problem to approximate the $N$-body sum, because the kernel $K$ is non-smooth (or possibly diverging) when $\bx_i$ approaches $\by_j$, which prevents the use of global function approximation methods. 
At the same time, these kernels often decay slowly in $ \| \bx_i-\by_j \|$, meaning interaction distances are large and all $N^2$ terms in the sum must be addressed. The precorrected-FFT method \cite{phillips_error_1996,phillips2002precorrected} achieves this by approximating the potential from general source points with an equivalent set of charges on a regular grid, allowing the sum to be diagonalized with a pair of fast Fourier transform (FFT) and inverse transform (IFFT) operations. 
The nonuniform to regular approximation is only valid for sufficiently separated interactions, so interactions between nearby points must be ``pre-corrected''. 
While past work has focused on applying this method to individual PDE kernels, here we propose a ``kernel-independent'' verson of the algorithm which can automatically and efficiently apply to a broad class of problems given only pointwise access to the kernel. 
We also show how the method can be applied to a more general class of kernels using a recently-developed extension \cite{Askham_Goodwill_Hoskins_Nekrasov_Rachh_2025,goodwill_parametrix_2026} of the method of proxy points \cite{yeAnalyticalLowRankCompression2020}.
To emphasize the generality of the kernel-independent precorrected-FFT method, we provide numerical examples solving integral and integro-differential equations arising in biophysics, geophysics, and differential geometry.

The precorrected-FFT method is algorithmically similar to grid-based Ewald methods \cite{ewald_berechnung_1921,hockney_computer_1988,darden_particle_1993,liangAcceleratingFastEwald2025,bostrom_fast_2026}, which also spread nonuniform points to a regular grid, apply a convolution in Fourier space, and compute nearby interactions directly. For both precorrected-FFT and grid-based Ewald methods, the computationally intensive step is a FFT/IFFT pair, meaning both methods scale like $O(N\log(N))$. The conceptual difference between our proposed method and grid-based Ewald methods becomes apparent when one imagines the implementation. 
The prototypical five step Ewald method \cite{liangAcceleratingFastEwald2025} can only be implemented \emph{with a-priori knowledge of the kernel's decay in Fourier space}, which determines the number of required Fourier modes, and, indirectly, the number of regular gridpoints. 
The method we develop is complementary but serves a different purpose: a method which can automatically and efficiently determine the number of gridpoints and Fourier modes required for a user-specified error tolerance, using only pointwise access to the kernel. 
We envision this will be particularly useful for applications in integral equation formulations of PDEs, in settings where the Fourier transform of the integro-differential operator is complicated \cite{Askham_Goodwill_Hoskins_Nekrasov_Rachh_2025,herrera2026instability}, or the kernel changes during the runtime of the algorithm \cite{zhao_robust_2015,wangPeakHeatFlux2026}. 

The precorrected-FFT method was originally developed by Phillips and White \cite{phillips_error_1996,phillips2002precorrected} for the analysis of integrated circuits. It has gained popularity in circuit analysis \cite{hu_fast_2003}. Tissari and Rahola \cite{tissari_precorrected-fft_2003} use the method to accelerate forward solvers in a magnetoencephalography inverse problem in brain imaging. 
One of the authors of the present work has used the method in studies of surface wave interactions with floating plates and membranes \cite{Askham_Goodwill_Hoskins_Nekrasov_Rachh_2025}. 
The precorrected-FFT method has also been used to accelerate computations in nonlinear hydrodynamics \cite{yan2011efficient,jiang_precorrected-fft_2012}, acoustic wave propagation in shallow water \cite{li_development_2020}, acoustic scattering by impenetrable obstacles \cite{bruno2001fast,peng_precorrected-fft_2013,li_precorrected-fft_2022}, electromagnetic scattering by perfect conductors \cite{nie_precorrected-fft_2002,nie2002fast}, and electromagnetic analysis of thin multilayered conductors \cite{okhmatovski_three-dimensional_2009}. 
The precorrected-FFT method has been combined with a fast multipole method for large-scale aerodynamic simulations of aircraft \cite{willis_fastaero_2005}.

Fast algorithms for kernel summation rely on analytic properties of the kernel $K$ to form structured approximations of \cref{eq:Nbody} which can be exploited computationally. 
For instance, fast multipole methods \cite{greengard_fast_1987,carrier_fast_1988,ying_kernel-independent_2004} rely on a multipole or similar series expansion of $K$, which allows for rapid approximate interactions between sufficiently separated points. These methods incur complexity $O(N)$, and can adapt to highly-nonuniform distributions of points.
Jiang and Greengard's \cite{jiangDualspaceMultilevelKernelsplitting2025,klinteberg2026fast} recent dual-space multilevel kernel framework performs hierarchical kernel splits into smooth and local kernels, efficiently handling highly-nonuniform point distributions. 

While the fast multipole method enjoys an optimal computational complexity and adaptibility to challenging distributions of points, grid-based Ewald methods are often preferable because the constant factors implicit in the $O(N \log N)$ estimate are quite small, FFT software is highly-optimized for modern computer architectures, and many point distributions of interest are relatively benign enough that multiresolution schemes are not needed. 
In this paper, we show our kernel-independent precorrected-FFT method enjoys the same qualities, which lead to large speedups when used in integral equation solvers.

The remainder of this paper is organized in follows.
In \cref{sec:algo}, we describe the kernel-independent precorrected FFT algorithm in detail and give theoretical and empirical evidence for our choice of proxy points in various nonstandard kernels. A MATLAB implementation of the method as described in this section is available at \url{https://github.com/meliao/pcfft}. 
We summarize the complexity of the algorithm and run a series a scalability studies in \cref{sec:comp}.
In \cref{sec:numerics}, we provide a series of numerical examples where the precorrected-FFT is used to accelerate a wide variety of integral equations solvers. Finally, we then conclude with some closing remarks in \cref{sec:conc}.

\section{Algorithm description}\label{sec:algo}
Fast algorithms for computing $N$-body sums with generic point locations tend to come in one of two categories. The first, category, which includes the fast multipole method \cite{carrier_fast_1988,greengard_fast_1987} and Ewald splitting schemes \cite{ewald_berechnung_1921,hockney_computer_1988,darden_particle_1993,liangAcceleratingFastEwald2025,bostrom_fast_2026}, leverage an analytical understanding of the kernel, and typically explicit expressions, to achieve their speed. Leveraging this understanding gives these methods good computational speed, but comes at the cost of being kernel-specific. 
The other category includes fast algorithms that leverage the fact that for most kernels of interest the function $K(\bx-\by)$ can be well approximated by a low rank function for $(\by,\bx)$ in large swathes of the computational domain, typically for $\by$ in a ball and $\bx$ outside a larger ball. Sometimes this takes the form of polynomial series approximations (e.g. \cite{Fong_Darve_2009}), but a more popular method for translation-invariant kernels $K$ is to build an approximation of the form
\begin{align}\label{eq:approx_kern}
    K(\bx - \by) \approx \sum_{\bz_i \in \overline{\cB}(\by)} K(\bx - \bz_i ) \bs{\chi}_i (\by),
\end{align}
where $\overline{\cB}(\by)$ is a collection points near $\by$. 
This approximation can be thought of as ``spreading'' out the charge density $\boldsymbol{\sigma}_j\to \{\bs\chi_i(\bs y)\}$ and is the basis of a number of algorithms, including recursive skeletonization \cite{ho2012fast,martinsson2005fast,minden2017recursive}, kernel-independent FMMs \cite{malhotra2015pvfmm,ho_flam_2020}, and the precorrected-FFT method \cite{phillips_error_1996,phillips2002precorrected}. 
In the precorrected-FFT method, we lay down a regular grid of points $\bz_i$ that cover the computational domain and choose $\cN(\by)$ to be a subset of that grid living near $\by$. 
In \cref{sec:findpxy} we describe how we choose the subset and the equivalent weights $\bs{\chi}_i(\by)$ so that this approximation \cref{eq:approx_kern} holds for $\bx$ that is well-separated from $\by$. A major contribution of the present work is establishing this approximation for kernels $K$ which do not satisfy a Green's identity. 

If we neglect, for now, the fact that \cref{eq:approx_kern} only holds for some $\bx$, we arrive at the approximation
\begin{align*}
    \bu \approx \bK_{\bx \leftarrow \bz} \bS_{\by} \boldsymbol{\sigma},
\end{align*}
where $\left( \bK_{\bx \leftarrow \bz} \right)_{i,j} = K(\bx_i - \bz_j)$ is dense, 
and $\bS_{\by}$ is a sparse matrix of the spreading weights $\bs{\chi}_i(\by_j)$. An equivalent approximation to \cref{eq:approx_kern} will hold for $\bx$. Nesting the two approximations yields a sum of the form:
\begin{align*}
    \bu(\bx_k) &= \sum_{\by_l \neq \bx_k} K(\bx_k-\by_l) \boldsymbol{\sigma}_l \nonumber\\
    &\approx \sum_{\by_l\neq \bx_k}\lp\sum_{\bz_i \in \overline{\cB}(\by_l)} \left( \sum_{\bz_j \in \overline{\cB} (\bx_k)} K(\bz_j - \bz_i )  \tilde{\bs{\chi}}_j(\bx_k) \right) \bs{\chi}_i (\by_l)\rp \bs\sigma_l , \nonumber
\end{align*}
where again, we are temporarily ignoring the ``well-separated'' condition for \cref{eq:approx_kern}.
This nested summation can be organized into the composition of the ``spreading'' operation, a kernel summation on the approximation points $\bz_i$, and the adjoint of another ``spreading'' operation:
\begin{align}
    \bu \approx \bS_{\bx}^\top \bK_{\bz \leftarrow \bz} \bS_{\by} \bs{\sigma}
    \label{eq:far_approx}
\end{align}
The advantage of this expression is that the only remaining dense matrix, $\bK_{\bz \leftarrow \bz}$, has a multilevel Toeplitz structure because the $\bz_i$'s form a $d$-dimensional grid. This allows us to apply it quickly using a zero-padded fast Fourier transform \cite{frigoDesignImplementationFFTW32005}:
\begin{align*}
    \bK_{\bz \leftarrow \bz} \bv = \cF^{-1} \widehat{\bK_{\bz \leftarrow \bz}} \cF \bv,
\end{align*} 
where $\cF$ is a zero-padded $d$-dimensional fast Fourier transform, $\cF^{-1}$ is its inverse, and $\widehat{\bK_{\bz \leftarrow \bz}}$ is the Fourier transform of $K$ evaluated on the regular grid. \Cref{fig:matrix-decomp} shows the structure of the operators used to compute the approximate $N$-body sum in \cref{eq:far_approx}.

\DontPrintSemicolon
\SetArgSty{textnormal}
\begin{algorithm}[h!]
    \caption{
       \texttt{PCFFT\_APPLY}    }
    \label{alg:pcfft_apply}
    \KwIn{spreading matrices $\bS_{\bx},\bS_{\by}$;
    padded discrete Fourier transform of kernel $\widehat{\bK_{\bz \leftarrow \bz}}$;
    correction matrix $\bA$;
    source weights $\bs{\sigma}$.}
    Spread sources to the regular grid $\bS_{\by} \bs{\sigma}$ \\
    Evaluate discrete Fourier transform of sources $\texttt{IFFT}\left( \widehat{\bK_{\bz \leftarrow \bz}} \cdot \texttt{FFT}\left(  \bS_{\by} \bs{\sigma} \right)\right)$ \\
    Interpolate from regular grid to targets $ \bS_{\bx}^\top \texttt{IFFT}\left( \widehat{\bK_{\bz \leftarrow \bz}} \cdot \texttt{FFT}\left(  \bS_{\by} \bs{\sigma} \right)\right)$ \\
    Correct near interactions $\bA \bs{\sigma}$ \\
    $\bu \gets \bS_{\bx}^\top \texttt{IFFT}\left( \widehat{\bK_{\bz \leftarrow \bz}} \cdot \texttt{FFT}\left(  \bS_{\by} \bs{\sigma} \right)\right) + \bA \bs{\sigma}$ \\
    \KwResult{$\bu$}
\end{algorithm}

\begin{figure}[h!]
\centering
\resizebox{0.9\linewidth}{!}{
\begin{tikzpicture}[scale=0.45]
  \colorlet{sparsecol}{orange!40}
  \colorlet{fftcol}{blue!15}
  \colorlet{diagcol}{green!40}

  %

  \foreach \i/\jfrom/\jto in {0/0/1, 1/1/3, 2/2/4, 3/3/5, 4/4/6, 5/6/7} {
    \foreach \j in {\jfrom,...,\jto} {
      \fill[sparsecol] (\j, {-(\i+2)}) rectangle ++(1,1);
    }
  }
  \draw[gray!60,thin,step=1] (0,-7) grid (8,-1);
  \draw[black,semithick] (0,-7) rectangle (8,-1);
  \node[above,font=\small] at (4,-1) {$\bS_{\bx}^\top$};

  \node[font=\large] at (8.75,-4) {$\circ$};

  \node[font=\normalsize\sffamily] at (11.5,-4) {\texttt{IFFT}};

  \node[font=\large] at (14.25,-4) {$\circ$};

  \foreach \k in {0,1,2,3,4,5,6,7} {
    \fill[diagcol] ({15+\k},{-(\k+1)}) rectangle ++(1,1);
  }
  \draw[gray!60,thin,step=1] (15,-8) grid (23,0);
  \draw[black,semithick] (15,-8) rectangle (23,0);
  \node[above,font=\small] at (19,0) {$\mathbf{\widehat{K}}$};

  \node[font=\large] at (23.75,-4) {$\circ$};

  \node[font=\normalsize\sffamily] at (26.5,-4) {\texttt{FFT}};

  \node[font=\large] at (29.25,-4) {$\circ$};

  \foreach \i/\jfrom/\jto in {0/0/1, 1/0/2, 2/1/3, 3/1/4, 4/2/4, 5/3/4, 6/4/5, 7/5/5} {
    \foreach \j in {\jfrom,...,\jto} {
      \fill[sparsecol] ({30+\j},{-(\i+1)}) rectangle ++(1,1);
    }
  }
  \draw[gray!60,thin,step=1] (30,-8) grid (36,0);
  \draw[black,semithick] (30,-8) rectangle (36,0);
  \node[above,font=\small] at (33,0) {$\bS_{\by}$};

\end{tikzpicture}
}
\caption{The matrix decomposition in \cref{eq:far_approx}, read right to left: $\bS_{\by}$ (orange) is sparse with contiguous nonzeros per row;  $\widehat{K}$ (green) is diagonal, holding the Fourier transform of $K$ on the grid; and $\bS_{\bx}^\top$ (orange) is sparse with contiguous nonzeros per column.}
\label{fig:matrix-decomp}
\end{figure}

Of course, the approximations required to arrive at \cref{eq:far_approx} are only valid when $\bx$ and $\by$ are sufficiently well-separated. 
To that end, we introduce another sparse matrix, $\bA$, which corrects the interactions of nearby points where \cref{eq:approx_kern} was used erroneously. Thus, our approximation for general sets of source and target points is
\begin{align}
    \bu \approx \bS_{\bx}^\top \bK_{\bz \leftarrow \bz} \bS_{\by} \bs{\sigma} + \bA \bs{\sigma}.
    \label{eq:pcfft_approx}
\end{align}

\Cref{alg:pcfft_apply} describes how we evaluate the $N$-body sum once the matrices $\bS_{\bx}$ $\bS_{\by}$, $\widehat{\bK_{\bz \leftarrow \bz}}$, and $\bA$ are pre-computed. 
We describe the process of computing $\bA$ in \cref{sec:corrections}. 
We discuss our method for choosing the regular grid $\bz_i$ and spreading weights $\bs \chi_i(\bx)$ in \cref{sec:findpxy}. 
We test these spreading algorithms in  \cref{sec:testpxy} and introduce a method for applying the same approximation to derivatives of the kernel in \cref{sec:der}.

\subsection{Corrections for nearby targets}\label{sec:corrections}
The first term in \cref{eq:pcfft_approx} applies a far-field approximation to all of the pairs of points $(\bx_i, \by_j)$, but is only valid for those pairs of points which are ``well-separated''. 
To identify these points, we split the computational domain up into regular sized ``spreading bins" with side length $2h$. 
For any point $\by$ in a spreading bin $\cB_k \subset \bbR^d$, we will spread $\by$ to all the regular grid points that live in a ``spreading box'', $\overline{\cB}_{k}$, which is twice the size of the spreading bin; see \cref{fig:spreading_geometry}. 
We will say that a point $\bx$ is well-separated from $\cB_k$ if it is at least a distance $\rho_1=2\sqrt{d}h$ from the center of $\cB_k$. 
In other words, we will demand that \cref{eq:approx_kern} is valid for $\bx$ a distance $\rho_1$ from the center of $\cB_k$. \Cref{eq:far_approx} will therefore be valid when every point in $\overline{\cB}(\bx_k)$ is well separated from every point in $\overline{\cB}(\by_l)$. 
We can correct the errors made by \cref{eq:far_approx} by subtracting the contribution of this approximation for all pairs of points belonging to spreading bins whose centers lie within $2\rho_1$ of each other (\cref{fig:spreading_template}) and then adding the correct contributions. 
\begin{remark}
    A careful examination of \cref{fig:spreading_template} will show that we can actually use \cref{eq:far_approx} for boxes that are slightly closer than $2\rho_1$. We give ourselves the extra margin for stability, as it does not substantially increase the cost of the algorithm.
\end{remark}

In order to compute these contributions, we let 
\begin{align*}
    \cN_k = \left\{ {\bz\in \overline{\cB}_j} \ \  | \ \  \overline{\cB}_j \text{ is near } \cB_k \right\}
\end{align*}
be the neighborhood of the spreading bin $\cB_k$.  To evaluate the contribution of nearby points to \cref{eq:far_approx}, we construct a template for evaluating the kernel from a spreading box $\overline{\cB}$ to a neighborhood $\cN$:
\begin{align*}
     \left(\bK_{\cN \leftarrow \overline{\cB}}\right)_{i ,j } = K(\bz_j - \bz_i),
\end{align*}
where the points $\{ \bz_j \}$ are the regular discretization points in $\cN_k$ and the points $\{ \bz_i \}$ are the regular discretization points in $\overline{\cB}_k$.
Due to the translation invariance of the kernel $K(\cdot)$, we only need to precompute this matrix once, and we can use it for all bins $\cB_k$. 
We can now compute the correction matrix $\bA$:
\begin{align*}
    \bA(\cI_k, \cJ_k) =  \bD_k - \bS_{\bx}(:, \cI_k)^\top \bK_{\cN \leftarrow \overline{\cB}} \bS_{\by}(:, \cJ_k),
\end{align*}
where $\cI_k$ is the set of indices for target points $\bx_i$ in neighborhood $\cN_k$, $\cJ_k$ is the set of indices for source points $\by_j$ in spreading bin ${\cB}_k$, $\bD_k$ is the matrix of direct interactions between these points, and all other entries of $\bA$ are zero. 
\Cref{alg:compute_correction} summarizes this process of computing the correction matrix.

\newcommand{\halfside}{\texttt{halfside}}

\begin{figure}[h!]
\centering
\begin{subfigure}[b]{0.47\textwidth}
    \resizebox{\textwidth}{!}{
    \begin{tikzpicture}[scale=2]
  \draw[black!50, dashed] (-1,-1) rectangle (1,1);
  \draw[black!40, dashed] (-0.5,-0.5) rectangle (0.5,0.5);

  \draw[<->, thick] (-1,-1.25) -- (1,-1.25) node[midway, below] {Spreading box $\overline{\cB}$};

  \draw[<->, thick] (-0.5,-0.6) -- (0.5,-0.6) node[midway, below] {Spreading bin $\cB$};

  \foreach \x in {-0.9, -0.6, -0.3, 0, 0.3, 0.6, 0.9} {
    \foreach \y in {-0.9, -0.6, -0.3, 0, 0.3, 0.6, 0.9} {
      \fill[black] (\x,\y) circle (0.02);
    }
  }

  \draw[<->, thick] (-1.15,-0.85) -- (-1.15,0.85) node[midway, above, rotate=90] {$n_{\operatorname{spread}}$ points};

  \filldraw[red] (-0.3,-0.2) circle (0.04); 
  \filldraw[red] (-0.1,0.2) circle (0.04);
  \filldraw[red] (0.2,-0.1) circle (0.04);

  \draw[blue, thick] (0,0) circle (2);
  \foreach \t in {0,20,...,340} {
    \fill[blue] ( {2*cos(\t)}, {2*sin(\t)} ) circle (0.02);
  }

  \draw[<->, thick] (0,0) -- ({2*cos(45)},{2*sin(45)}) node[pos=0.3, above right, rotate=45] {Proxy radius $\rho_1$};

\end{tikzpicture}
    }
    \caption{The geometry of a spreading bin and box.}
    \label{fig:spreading_geometry}
\end{subfigure}
\begin{subfigure}[b]{0.47\textwidth}
    \resizebox{\textwidth}{!}{
    \begin{tikzpicture}
  \def\binSize{2cm}   
  \def\proxyRadius{1.05} 
  \def\ibin{3}
  \def\jbin{4}
  \def\boxMargin{0.2} 

  \begin{scope}[x=\binSize,y=\binSize]

  \coordinate (binCenter) at ({\ibin+0.5},{\jbin+0.5});

  \filldraw[fill=purple!50!white, draw=none] ({\ibin-1-\boxMargin},{\jbin+1-\boxMargin}) rectangle ++({1+2*\boxMargin},{1+2*\boxMargin});
  \filldraw[fill=purple!50!white, draw=none] ({\ibin-1-\boxMargin},{\jbin-\boxMargin})   rectangle ++({1+2*\boxMargin},{1+2*\boxMargin});
  \filldraw[fill=purple!50!white, draw=none] ({\ibin-1-\boxMargin},{\jbin-1-\boxMargin}) rectangle ++({1+2*\boxMargin},{1+2*\boxMargin});
  \filldraw[fill=purple!50!white, draw=none] ({\ibin-\boxMargin},  {\jbin+1-\boxMargin}) rectangle ++({1+2*\boxMargin},{1+2*\boxMargin});
  \filldraw[fill=purple!50!white, draw=none] ({\ibin-\boxMargin},  {\jbin-1-\boxMargin}) rectangle ++({1+2*\boxMargin},{1+2*\boxMargin});
  \filldraw[fill=purple!50!white, draw=none] ({\ibin+1-\boxMargin},{\jbin-1-\boxMargin}) rectangle ++({1+2*\boxMargin},{1+2*\boxMargin});
  \filldraw[fill=purple!50!white, draw=none] ({\ibin+1-\boxMargin},{\jbin-\boxMargin})   rectangle ++({1+2*\boxMargin},{1+2*\boxMargin});
  \filldraw[fill=purple!50!white, draw=none] ({\ibin+1-\boxMargin},{\jbin+1-\boxMargin}) rectangle ++({1+2*\boxMargin},{1+2*\boxMargin});
  \filldraw[fill=purple!50!white, draw=none] ({\ibin+2-\boxMargin},{\jbin-\boxMargin})   rectangle ++({1+2*\boxMargin},{1+2*\boxMargin});
  \filldraw[fill=purple!50!white, draw=none] ({\ibin-2-\boxMargin},{\jbin-\boxMargin})   rectangle ++({1+2*\boxMargin},{1+2*\boxMargin});
  \filldraw[fill=purple!50!white, draw=none] ({\ibin-\boxMargin},  {\jbin+2-\boxMargin}) rectangle ++({1+2*\boxMargin},{1+2*\boxMargin});
  \filldraw[fill=purple!50!white, draw=none] ({\ibin-\boxMargin},  {\jbin-2-\boxMargin}) rectangle ++({1+2*\boxMargin},{1+2*\boxMargin});

  \foreach \i in {0,...,6} {
    \foreach \j in {1,...,7} {
      \draw[black!50] (\i,\j) rectangle ++(1,1);
    }
  }

  \filldraw[fill=orange!20, draw=black] (\ibin,\jbin) rectangle ++(1,1);



  \draw[blue!70, dotted, thick] (binCenter) circle (\proxyRadius);

  \coordinate (bottomCenter) at ({\ibin+0.5},{\jbin-1.5});
  \coordinate (bottomRight) at ({\ibin+2.5},{\jbin-1.5});
  \draw[blue!70, dotted, thick] (bottomCenter) circle (\proxyRadius);
  \coordinate (bottomMid) at ({\ibin+1.5},{\jbin-1.5});
  \draw[blue!70, dotted, thick] (bottomMid) circle (\proxyRadius);


  \draw[<->, thin] (binCenter) -- ++(\proxyRadius,0) node[midway, above, font=\footnotesize] {Proxy radius};
  \end{scope}
\end{tikzpicture}
    }
    \caption{Illustrating the neighborhood of a spreading bin.}
    \label{fig:spreading_template}
\end{subfigure}
\caption{\Cref{fig:spreading_geometry}: the geometry of the spreading operation. Source or target points (red dots) belonging to the spreading bin are spread to the regular grid points (black dots) in the spreading box. 
The proxy surface is a constant radius $\rho_1$ away from the center of the spreading box. In this subfigure, $\nshell=1$, $\nproxy = 18$, and $\nspread=7$.
\Cref{fig:spreading_template}: our criteria for nearness is whether the inner proxy surfaces of two spreading bins intersect. The proxy surfaces of a few bins are shown in blue. The spatial support of bin $\cB_k$ is shown in orange, and the spatial support of its neighborhood $\cN_k$ is shown in pink.
}
\label{fig:spreading_diagram}
\end{figure}

\begin{algorithm}[h!]
    \caption{
       \texttt{COMPUTE\_CORRECTION}    }
    \label{alg:compute_correction}
    \KwIn{Target points $\{\bx_i \}_{i=1,\dots, N_{\text{src}} }$; source points $\{ \by_j\}_{j=1,\dots, N_{\text{targ}}}$; spreading matrices $\bS_{\by},\bS_{\bx}$; neighborhood template $\bK_{\cN \leftarrow \overline{\cB}}$; kernel $K(\cdot)$.
    }
    Initialize $\bA$ to zero.\\
    \For() {Bin $\cB_k \in \{ \cB_1, \cB_2, \dots, \cB_{N_{\text{bin}}} \}$}
    {
        Let $\cI_{k}$ index the target points in the neighborhood of $\cB_k$. \\
        Let $\cJ_{k}$ index the source points in spreading box $\overline{\cB}_k$. \\

    Compute exact interactions $\bD_{i,j} =  K(\bx_i, \by_j)$ for each $\bx_i \in \cI_{k}$ and $\by_j \in \cJ_{k}$. \\
    Compute approximate interactions $\bS_{\bx}(:, \cJ_k)^\top \bK_{\cN \leftarrow \overline{\cB}} \bS_{\by}(:, \cI_k)$.\\
    Add and subtract these matrices to the sparse matrix $\bA(\cI_k, \cJ_k) \gets \bD - \bS_{\bx}(:, \cJ_k)^\top \bK_{\cN \leftarrow \overline{\cB}} \bS_{\by}(:, \cI_k)$. \\
    }
    \KwResult{$\bA$}
\end{algorithm}

\subsection{Finding the spreading parameters}\label{sec:findpxy}
This section discusses how we automatically choose how many discretization points $\bz_j$ live in each $\cN(\by_k)$ and how to find the equivalent charges $\bs{\chi}_j$ so that \cref{eq:far_approx} holds for well-separated points. 
This method is designed so that for any distribution of source points $\by_i$ in a spreading bin $\cB_k$, we can automatically map charges $\bsigma$ to equivalent charges $\bs{\chi}$ supported on the pre-selected grid points $\bz_j \in \overline{\cB_k}$.

First, we choose $h$, the half side length of the spreading bin so that each points has around $\nnbr$ points in its bin. This parameter can be chosen to balance the cost of the FFTs against the cost of the sparse corrections $\bA$ (discussed below in \cref{sec:comp}). 
We find $h$ by constructing an adaptive hyperoctree of the union of source and target points such that no leaf box has more than $\nnbr/2^d$ points. We then set $h$ to be 
\begin{align*}
h = \begin{cases}\frac{\bar{h}}{4\sqrt{\pi}} \lp\frac{\npts}{\hat n}\rp^{1/2} & d = 2,\\
\frac{\bar{h}}{4\sqrt{\pi}} \left( \frac{3\sqrt{\pi}}{4} \right)^{1/3} \lp\frac{\npts}{\hat n}\rp^{1/3} & d=3,
\end{cases},
\end{align*}
where $\bar{h}$ is the mean leaf box side length, and $\hat n$ is the mean occupancy of each leaf box. 
In our implementation, we construct the adaptive hyperoctree using the utility provided in the \texttt{FLAM} package \citep{ho_flam_2020}.

Once we have determined $h$, and so the bins $\cB_k$, we turn to finding spreading weights $\bs\chi$. Given a user-specified error tolerance $\epsilon$, source points $\by \in \cB_k$, and target points $\bx$ that are well separated from the spreading bin, we want weights $\bs \chi$ which satisfy 
\begin{align}
    \frac{ \| \bK_{\bx \leftarrow \bz} \bs{\chi} - \bK_{\bx \leftarrow \by} \bs{\sigma} \|_2 }{\| \bK_{\bx \leftarrow \by} \bs{\sigma} \|_2} < \epsilon.
    \label{eq:spread_check}
\end{align}
To find these weights, we introduce a set of ``proxy points'', $\bp_j$, living outside of the spreading box $\overline{\cB_k}$. 
The proxy points give us a set of target points, at which the kernel evaluations must match between the original charges and the equivalent charges. This constraint can be written as the linear problem for $\bs{\chi}$:
\begin{align}\label{eq:proxy_constraint}
    \bK_{\bp \leftarrow \bz} \bs{\chi} &= \bK_{\bp \leftarrow \by} \bs{\sigma}.
\end{align}
We choose $\bs \chi$ as the minimum $\ell_2$ norm least squares solution of this equation by computing a complete orthogonal decomposition of $\bK_{\bp \leftarrow \bz}$, thresholding with tolerance $\epsilon$, and computing the least-squares solution.

The idea of using proxy points has a long history in the development of kernel-independent FMMs and broader classes of kernel matrix approximation methods; see \cite{yeAnalyticalLowRankCompression2020} for an overview. 
The use of proxy points is also standard in pre-corrected FFT algorithms. Phillips and White
\citep{phillips_error_1996,phillips2002precorrected} use proxy points supported on a sphere to define a set of linear constraints, which is solved in a least-squares sense via the pseudoinverse computed with the SVD. 
Error analysis is provided for Helmholtz and Laplace kernels.
\citet{nie2002fast} uses the same method applied to the electric field integral equation kernel, which includes a derivative. 
\citet{bruno2001fast} use proxy points on a sphere for the single- and double-layer Helmholtz kernels.
\citet{yeAnalyticalLowRankCompression2020} analyze the accuracy of the proxy point method for the Laplace kernel when the proxy points are chosen uniformly on the proxy surface; in this case they show the approximation error converges exponentially with the number of proxy points. 

In all of these works, the proxy points are arranged on a single curve or surface. A PDE stability theorem or Green's identity can be used to show that this is enough to guarantee accuracy of \cref{eq:approx_kern} outside of that shell. 
Indeed, if the proxy points form good interpolation nodes (we use equispaced points when $d=2$ and Fibonacci spiral points when $d=3$), \cref{eq:proxy_constraint} will guarantee that 
\begin{equation}
    K(\bx - \by) - \sum_{\bz_i\in \cN(\by)}K(\bx - \bz_i ) \bs{\chi}_i (\by),
\end{equation}
is a solution of the PDE with small Dirichlet data determined by the interpolation error. 
This will guarantee that it is small for all $\bx$ outside the shell.
For kernels based on higher order PDEs, PDE stability theorems require more than just Dirichlet boundary data. 
We discuss how we handle this case in \cref{sec:der}.

For kernels that do not arise from a Green's function of an elliptic PDE, 
we can not expect sufficient accuracy when using a single proxy surface. Instead, we enforce agreement on a set of $\nshell$ proxy surfaces, with radii $\rho_k$. We choose $\rho_1$ in the same way as described above and choose the remaining shells spaced so that $1/\rho_k$ are second-kind Chebyshev nodes. If $\{x_k\}_{k=2, \dots \nshell}$ are the second-kind Chebyshev points on the interval $(-1,1)$, then the radii are 
\begin{align*}
    \rho_k = \frac{2 \rho_1}{x_k + 1} = \frac{2\rho_1}{\cos\lp\frac{ (k-1)\pi}{\nshell-1}\rp + 1}, \quad k=2,\ldots, \nshell.
\end{align*}
This choice is motivated by \cite[Thrm. 4]{goodwill_parametrix_2026}, where the authors phrase the approximation \cref{eq:approx_kern} as interpolation problem of each angular Fourier mode of $K(\cdot -\by)$. They use this framing to show that if $K$ is some combination of derivatives of $\log(\cdot)$ in $\bbR^3$, then choosing the shells to be reciprocal Chebyshev nodes guarantees spectral convergence in the approximation \cref{eq:approx_kern} as $\nshell$ grows, up to the error determined by the number of nodes on each shell, $\nproxy$. Their proof technique can be directly extended to other kernels that have multipole expansions in a lower dimension and we find this choice works well for all of the kernels considered in this work. We add the extra shell at $\rho_1$ for stability and to provide uniformity with the single shell case. We find our choice is an improvement over the method suggested by \citet{Askham_Goodwill_Hoskins_Nekrasov_Rachh_2025}, who choose $\rho_k$ so that $1/\rho_k$ are equally-spaced.

Ultimately, our algorithm has three kernel-specific parameters:  the number of grid points per box in each direction, $\nspread$; the number of proxy points on each shell, $\nproxy$; and the number of proxy shells, $\nshell$. 
See \cref{fig:spreading_geometry} for an example of these parameters.
For individual kernels, analysis of multipole expansions allow us to derive how these parameters scale with tolerance $\epsilon$, using arguments similar to those in \cite{goodwill_parametrix_2026}. These estimates are listed in \cref{tab:typ_nspread} for a few representative examples.

\begin{table}[h!]
    \centering
    \begin{tabular}{lllll}
        \toprule
        Kernel & $d$ & $\nspread$ & $\nproxy$ & $\nshell$ \\
        \midrule
        $\log(r)$ & 2& $O\lp |\log \epsilon|^{1/2}\rp $ & $O\lp |\log \epsilon|\rp $&$O(1)$\\
        $\log(r)$ & 3& $O\lp |\log \epsilon|\rp $ & $O\lp |\log \epsilon|^{2}\rp $& $O\lp |\log \epsilon|\rp $\\
        $1/r$ & 3& $O\lp |\log \epsilon|^{2/3}\rp $ & $O\lp |\log \epsilon|^{2}\rp $&$O(1)$ \\
        \bottomrule
    \end{tabular}
    \caption{The dependence of $\nspread$, $\nproxy$, and $\nshell$ on $\epsilon$ for a few representative kernels.}
    \label{tab:typ_nspread}
\end{table}

While estimates exist for particular kernels, we use a purely empirical approach to choosing $(\nspread, \nproxy, \nshell)$ in this work, which we find to be robust and efficient. First, we choose $\nproxy$ so that $K(\bp - \by_j)$ is a well resolved function of $\bp$ on the shell of radius $\rho_1$ for a few sources $\by_j$ by checking the Fourier series or spherical harmonic tails in two or three dimensions, respectively. 
We do this using 117 random $\by_j$ inside a box with half side length $h$, as well as its corners. 
We then compute the coefficients of a series expansion of $K(\bp - \by)$ in a Fourier series ($d=2$) or a spherical harmonic series ($d=3$); we ensure there are enough discretization points $\bp$ to capture the tails of this series.
In both cases, we give ourselves a margin to account for the Lesbegue constant of the interpolation by multiplying the minimal $\nproxy$ by $(4/3)^{d-1}$, which corresponds to enforcing accuracy if we chop off the last quarter of the orders in the Fourier or spherical harmonic series expansion of $K(\bp-\by)$.

Once we have determined $\nproxy$, we sweep through possible pairs $(\nspread,\nshell)$ looking for an allowable pair.
For each proposed $\nspread$, we sweep up in $\nshell$ to see if \cref{eq:spread_check} is satisfied for 500 targets points with random radii in $(\rho_1,2\rho_1)$  and spread uniformly in angle. If none are found for $\nshell$ less than some maximum (typically 20), we increase $\nspread$ by one and then sweep through $\nshell$ again. If the kernel is known to satisfy a Green's identity, then we force $\nshell=1$ and simply sweep in $\nspread$. Our algorithm for computing $(\nspread,\nproxy,\nshell)$ is summarized in \cref{alg:nspread_nproxy}. 

\begin{remark}\label{rem:spikeshell}
    We will see below (e.g. in \cref{subfig:fig_log3d_1overr2d_nshell}) that this algorithm can occasionally lead to large $\nshell$ when a given $\nspread$ is barely enough to achieve a tolerances. We have found that this large $\nshell$ can be avoided by checking if there is a smaller number of shells that works with $\nspread+1$ grid points in each direction. In practice, we use  \cref{alg:nspread_nproxy} as it yields the smallest $\nspread$, which has a bigger impact on the code speed.
\end{remark}

\begin{algorithm}[h!]
    \caption{
       \texttt{NSPREAD\_NPROXY\_NSHELL}
       }
    \label{alg:nspread_nproxy}
    \KwIn{Spreading bin sidelength $h$; proxy radius $\rho$; requested tolerance $\epsilon$; kernel $K(\cdot)$; \texttt{multishell} flag.
    }
    If \texttt{multishell}, $n_{\operatorname{max}}\gets 20$, else $n_{\operatorname{max}}\gets 1$.\;
    $\by \gets$ 117 points randomly sampled uniform on $[-h,h]^d$ and append the corners. \;
    Find $\nproxy$ that resolves $K(\bp-\by)$ on the first proxy shell. \;
    $\nspread \gets 1$. \; 
    $\nshell \gets 1$. \;\label{ln:while} 
    While \cref{eq:spread_check} is unsatisfied, increase $\nshell$. If $\nshell>n_{\operatorname{max}}$, $\nspread \gets \nspread +1$ and return to  step \ref{ln:while}. If $\nspread>100$, abort.\; 
    \KwResult{$\nspread$; $\nproxy$; $\nshell$.}
\end{algorithm}

\subsection{Proxy point tests}\label{sec:testpxy}
In this section, we test \cref{alg:nspread_nproxy} and demonstrate how $\nspread,\npxy$, and $\nshell$ depend on $\epsilon$ for a variety of different kernels. In all of the experiments we choose the half side length $h=1$. In \cref{fig:laplace_helmholtz}, we show how $\nproxy$ and $\nspread$ depend on the tolerance for the Laplace and Helmholtz Green's functions in 2D and 3D. 
As predicted in \cref{tab:typ_nspread}, we see that $\nproxy$ scales as $|\log\epsilon|$ in 2D and $\log^2\epsilon$ in 3D, and that $\nspread$ grows a rate between $\sqrt{|\log\epsilon|}$ and $|\log\epsilon|$. We also see that $\nspread$ increases with the frequency of the Helmholtz problem. In \cref{fig:log3d_1overr2d} we repeat this experiment, but we swap the domain dimensions. This time the kernels do not satisfy Green's identities and so we run the algorithms in multi-shell mode. The figure shows that $\nproxy$ and $\nshell$ behave in a similar dimension-dependent way. We also see that the number of shells grows roughly as $|\log\epsilon|$ as the tolerance shrinks and that three dimensional kernels in 2D generally need a few more shells. The figures also show that the interaction rank with these kernels is slightly higher in this experiment than the previous one, as is exhibited by the increased $\nspread$ and number of proxy points. The large $\nshell$ at $\epsilon\sim 10^{-4}$ can be prevented using the approach described in \cref{rem:spikeshell}.

\begin{figure}
    \newlength{\panelw}\setlength{\panelw}{0.45\textwidth}

    \centering

    \begin{minipage}[b]{\panelw}\centering
        \includegraphics[width=\textwidth]{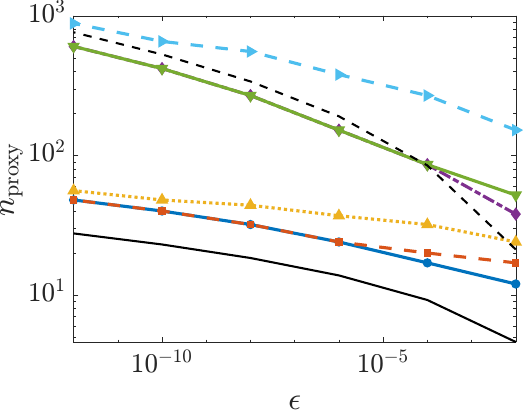}
    \end{minipage}\hfill
    \begin{minipage}[b]{\panelw}\centering
        \includegraphics[width=\textwidth]{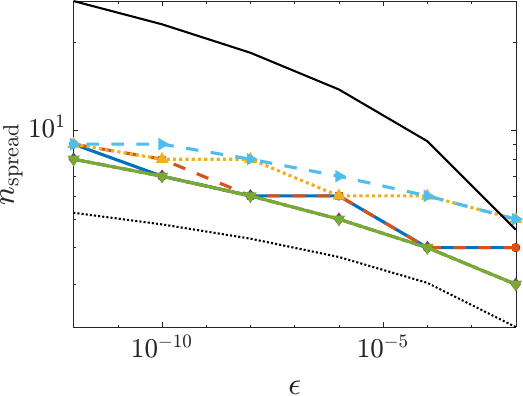}
    \end{minipage}
    
        \vspace{\leggap}
    \includegraphics[width=0.5\textwidth]{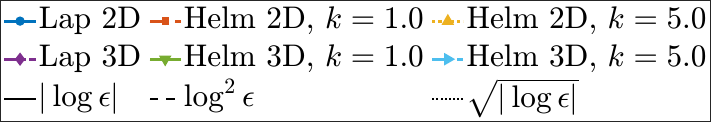}

    \vspace{\capgap}
    \begin{minipage}[t]{\panelw}\centering
        \subcaption{$\nproxy$ versus $\epsilon$}\label{subfig:fig_laplace_helmholtz_nproxy}
    \end{minipage}\hfill
    \begin{minipage}[t]{\panelw}\centering
        \subcaption{$\nspread$ versus $\epsilon$}\label{subfig:laplace_helmholtz_nspread}
    \end{minipage}
    
    \caption{The selected $\nproxy$ (\ref{subfig:fig_laplace_helmholtz_nproxy}) and $\nspread$ (\ref{subfig:laplace_helmholtz_nspread}) for the Laplace and Helmholtz Green's functions in 2D and 3D, as functions of the tolerance $\epsilon$. 
    }
    \label{fig:laplace_helmholtz}
\end{figure}

\begin{figure}
    \centering
    \setlength{\panelw}{0.32\textwidth}
    \begin{minipage}[b]{\panelw}\centering
        \includegraphics[width=\textwidth]{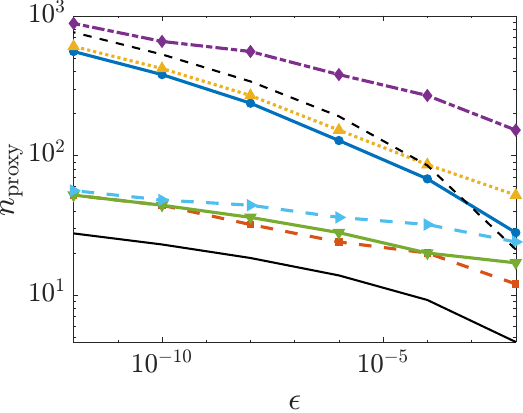}
    \end{minipage}\hfill
    \begin{minipage}[b]{\panelw}\centering
        \includegraphics[width=\textwidth]{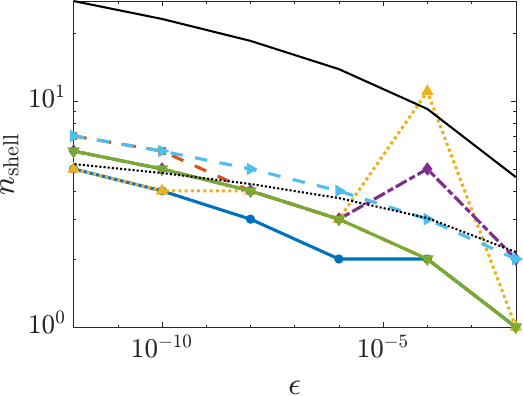}
    \end{minipage}\hfill
    \begin{minipage}[b]{\panelw}\centering
        \includegraphics[width=\textwidth]{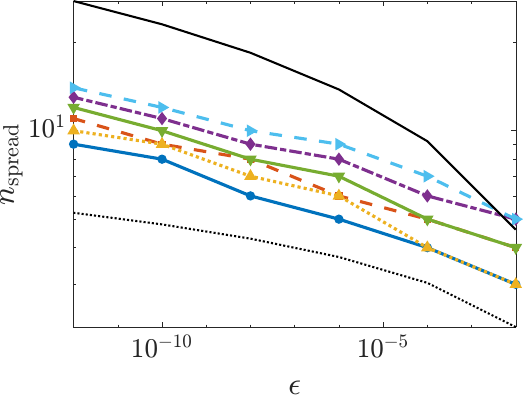}
    \end{minipage}
    
        \vspace{\leggap}
    \includegraphics[width=0.7\textwidth]{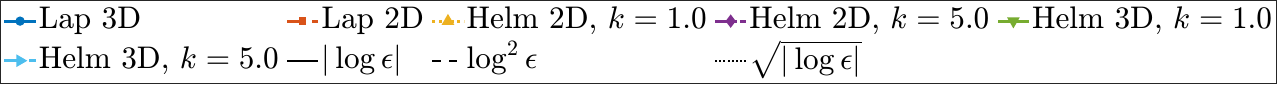}
    
\vspace{\capgap}
    \begin{minipage}[t]{\panelw}\centering
        \subcaption{$\nproxy$ versus $\epsilon$ for the swapped kernel dimension test}\label{subfig:fig_log3d_1overr2d_nproxy}
    \end{minipage}\hfill
    \begin{minipage}[t]{\panelw}\centering
        \subcaption{$\nshell$ versus $\epsilon$ for the swapped kernel dimension test}\label{subfig:fig_log3d_1overr2d_nshell}
    \end{minipage}\hfill
    \begin{minipage}[t]{\panelw}\centering
        \subcaption{$\nspread$ versus $\epsilon$ for the swapped kernel dimension test}\label{subfig:fig_log3d_1overr2d_nspread}
    \end{minipage}

    \caption{The selected $\nproxy$ (\ref{subfig:fig_log3d_1overr2d_nproxy}), $\nshell$ (\ref{subfig:fig_log3d_1overr2d_nshell}), and $\nspread$ (\ref{subfig:fig_log3d_1overr2d_nspread}) for the Laplace and Helmholtz Green's functions in 2D and 3D in the swapped dimension, as functions of the tolerance $\epsilon$. For example, the figure legend ``Lap 2D'' indicates $K(\bx - \by) = \frac{1}{\| \bx - \by \|}$ for $\bx, \by \in \bbR^2$.}
    \label{fig:log3d_1overr2d}
\end{figure}
 
Having tested the algorithm's performance on the Laplace and Helmholtz Green's functions, we now move to a broader class of kernels.
In \cref{fig:matern_struveK} we run an equivalent test for the Mat\'ern kernel in 2D, which is written in terms of the modified Bessel function of the second kind as $C_\nu(r) = \frac{2^{1-\nu}}{\Gamma(\nu)} (2\sqrt{\nu} r)^\nu K_\nu\lp\sqrt{2\nu}r\rp$ and is a standard kernel in geospatial statistics (see \cite{chen_fast_2014} and the references therein) and the Struve function $\mathbf{K}_0(\rho r)$ \cite[eqn. 11.2.5]{NIST:DLMF} in 2D, which occurs in surface wave problems (see \cite{askham2025integralequationsflexuralgravitywaves}). In  \cref{fig:aniso} we also test Gaussian kernels with covariance
\begin{equation*}
    C_{2D}=\frac{1}{16} \begin{pmatrix}
        2 & 1\\1&3
    \end{pmatrix}\quad \text{and}\quad C_{3D}=\frac{1}{16} \begin{pmatrix}
        2 & 1& 1\\1&4&1\\1&1&3
    \end{pmatrix}
\end{equation*}
in 2D and 3D respectively,
and $f(r) = \sin r / (r + r_1^2)$. Both tests show that the number of shells again grows slowly with decreasing tolerance and that $\nproxy$ grows logarithmically. 
\Cref{subfig:accuracy_aniso} shows our method attains high accuracy on these kernels which do not arise from the Green's function of an elliptic PDE.

As our final experiment in this vein, we test our algorithm on higher order equations, namely equations defined by increasing powers of the Laplacian. This time a single shell is sufficient, but we also enforce agreement of some radial derivatives ($n-1$ derivatives for $\Delta^n$) at our proxy points. \Cref{fig:polyharmonic} shows that the higher order equations require a slightly higher $\nproxy$ and $\nspread$. The spike in $\nproxy$ for the smallest $\epsilon$ is caused by the error in the finite difference scheme we used to compute the radial derivatives.

\begin{figure}
    \centering
    \setlength{\panelw}{0.32\textwidth}
    \begin{minipage}[b]{\panelw}\centering
        \includegraphics[width=\textwidth]{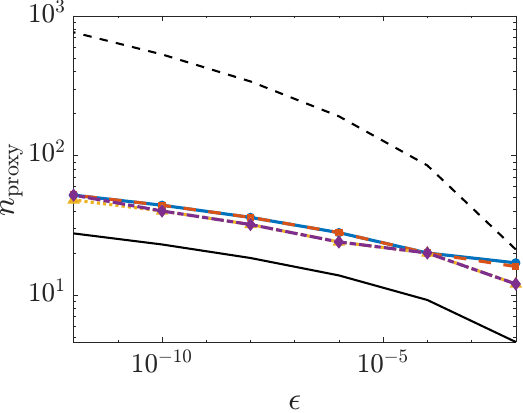}
    \end{minipage}\hfill
    \begin{minipage}[b]{\panelw}\centering
        \includegraphics[width=\textwidth]{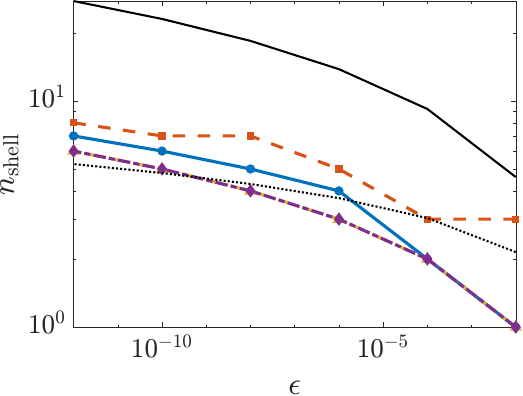}
    \end{minipage}\hfill
    \begin{minipage}[b]{\panelw}\centering
        \includegraphics[width=\textwidth]{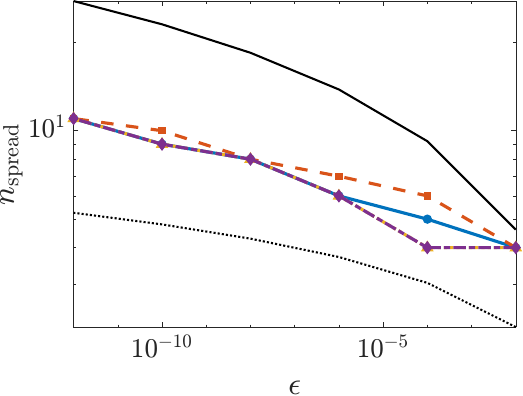}
    \end{minipage}

    \vspace{\leggap}
    \includegraphics[width=0.6\textwidth]{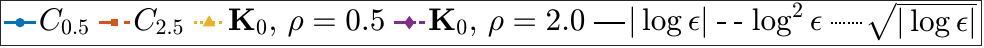}

    \vspace{\capgap}
    \begin{minipage}[t]{\panelw}\centering
        \subcaption{$\nproxy$ versus $\epsilon$}\label{subfig:matern_struveK_nproxy}
    \end{minipage}\hfill
    \begin{minipage}[t]{\panelw}\centering
        \subcaption{$\nshell$ versus $\epsilon$}\label{subfig:matern_struveK_nshell}
    \end{minipage}\hfill
    \begin{minipage}[t]{\panelw}\centering
        \subcaption{$\nspread$ versus $\epsilon$}\label{subfig:matern_struveK_nspread}
    \end{minipage}

\caption{The selected $\nproxy$ (\ref{subfig:matern_struveK_nproxy}), $\nshell$ (\ref{subfig:matern_struveK_nshell}), and $\nspread$ (\ref{subfig:matern_struveK_nshell}) for the Mat\'ern kernel $C_\nu$ and Struve function $\mathbf{K}_0(\rho \,r)$ in 2D, as functions of the tolerance $\epsilon$. }
    \label{fig:matern_struveK}
\end{figure}

\begin{figure}
    \centering
    \setlength{\panelw}{0.32\textwidth}

        \begin{minipage}[b]{\panelw}\centering
        \includegraphics[width=\textwidth]{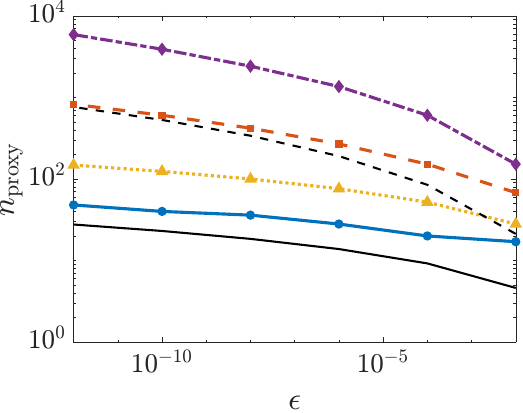}
    \end{minipage}\hfill
    \begin{minipage}[b]{\panelw}\centering
        \includegraphics[width=\textwidth]{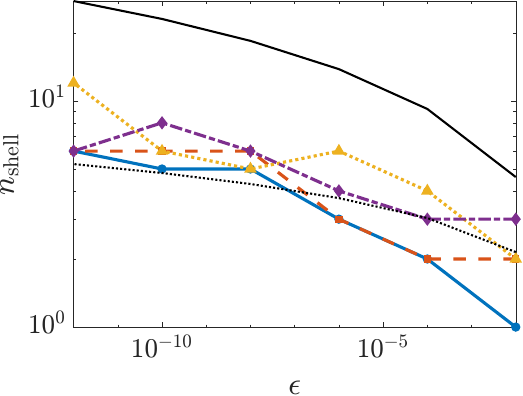}
    \end{minipage}\hfill
    \begin{minipage}[b]{\panelw}\centering
        \includegraphics[width=\textwidth]{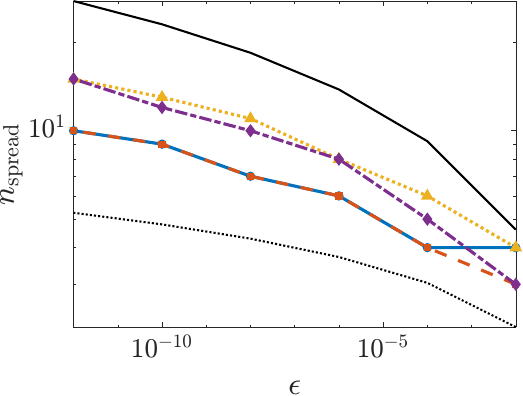}
    \end{minipage}

    \vspace{\leggap}
    \includegraphics[width=0.6\textwidth]{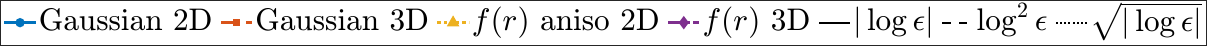}
    
    \vspace{\capgap}
    \begin{minipage}[t]{\panelw}\centering
        \subcaption{$\nproxy$ versus $\epsilon$}\label{subfig:aniso_nproxy}
    \end{minipage}\hfill
    \begin{minipage}[t]{\panelw}\centering
        \subcaption{$\nshell$ versus $\epsilon$}\label{subfig:aniso_nshell}
    \end{minipage}\hfill
    \begin{minipage}[t]{\panelw}\centering
        \subcaption{$\nspread$ versus $\epsilon$}\label{subfig:aniso_nspread}
    \end{minipage}
    
    \caption{The selected $\nproxy$ (\ref{subfig:aniso_nproxy}), $\nshell$ (\ref{subfig:aniso_nshell}), and $\nspread$ (\ref{subfig:aniso_nspread}) for an anisotropic Gaussian and $f(r) = \frac{\sin r}{r+r_1^2}$. \Cref{subfig:accuracy_aniso} shows the accuracy of the $N$-body sum for these examples.}
    \label{fig:aniso}
\end{figure}
\begin{figure}[h!]
\setlength{\panelw}{0.45\textwidth}
    \centering
    \begin{minipage}[b]{\panelw}\centering
        \includegraphics[width=\textwidth]{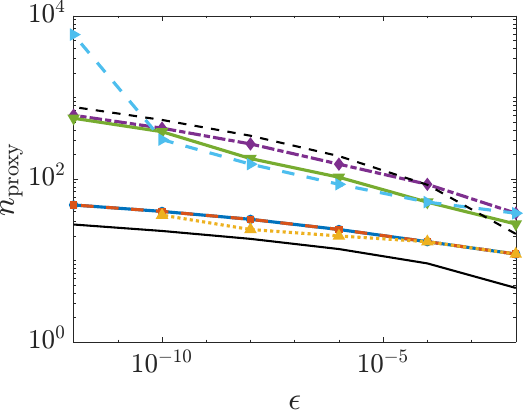}
    \end{minipage}\hfill
    \begin{minipage}[b]{\panelw}\centering
        \includegraphics[width=\textwidth]{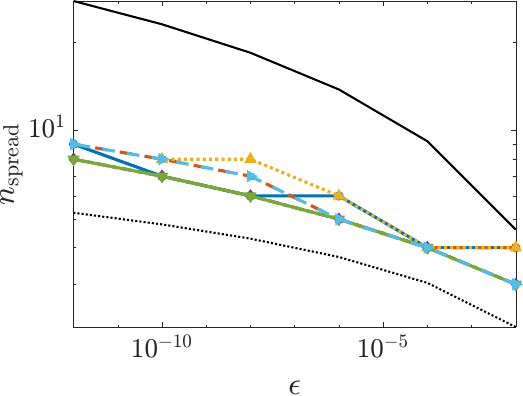}
    \end{minipage}
    
        \vspace{\leggap}
    \includegraphics[width=0.5\textwidth]{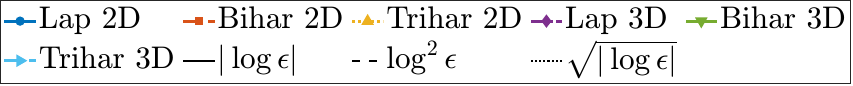}

    \vspace{\capgap}
    \begin{minipage}[t]{\panelw}\centering
        \subcaption{$\nproxy$ versus $\epsilon$}\label{subfig:polyharmonic_nproxy}
    \end{minipage}\hfill
    \begin{minipage}[t]{\panelw}\centering
        \subcaption{$\nspread$ versus $\epsilon$}\label{subfig:polyharmonic_nspread}
    \end{minipage}
    \caption{The selected $\nproxy$ (\ref{subfig:polyharmonic_nproxy}) and $\nspread$ (\ref{subfig:polyharmonic_nspread}) for the Green's functions for the Laplace, biharmonic, and triharmonic problems in 2D and 3D, as functions of the tolerance $\epsilon$. Because these kernels all arise from Green's functions of elliptic PDEs, we use $\nshell=1$.}
    \label{fig:polyharmonic}
\end{figure}

\subsection{Derivatives of the kernel}
\label{sec:der}
Many $N$-body calculations involve source or target derivatives of the kernel. 
Indeed, almost all boundary integral equation methods require them. 
Part of the power of the proxy point method, is that when approximation \cref{eq:approx_kern} holds for the original kernel $K(\bx, \by)$, we can expect a similar approximation to hold for the  derivative of the kernel $K'(\bx, \by ) = \bn(\by) \cdot \nabla_{\by} K(\bx, \by)$.
More formally, let $\cW_k$ be the set of points $\bx$ that are at least $\rho_1$ away from the center of spreading bin $\cB_k$. Then the set of functions $f$ such that
\begin{equation*}
    \opnm{dist}_{C_b(\cW_k)}(f, \opnm{span}\{K(\cdot - \bz_i) \, |\, \bz_i\in \overline{\cB}_k\}) \leq \epsilon
\end{equation*}
is a closed subset of $C_b(\cW_k)$ (the set of bounded continuous functions) in the sup norm.
Thus, if $K$ is decays at infinity and is smooth enough that 
\begin{align*}
    \lim_{h \to 0} \frac{K(\bx - (\by + h \bn(\by))) - K(\bx - \by)}{h}
\end{align*}
converges to $\bn(\by)\cdot \nabla_{\by}K(\bx-\by)$ in $C_b(\cW_k)$, then its derivative will also admit the low rank factorization. This result gives us the following theorem.
\begin{theorem}
    If $K$ is a kernel that is bounded outside the ball $B$ and admits a low rank factorization of the form \cref{eq:approx_kern} in $C_b(\cW_k)$, then for all $\epsilon>0$ there exists a spreading grid such that 
\begin{align}\label{eq:approx_kern_der}
    \sup_{\bx\in \cW_k}\left|\bn(\by)\cdot \nabla_{\by} K(\bx - \by) - \sum_{\bz_i \in \cN(\by)} K(\bx - \bz_i ) \tilde{\bs{\chi}}_i (\by)\right| <\epsilon
\end{align}
        for some weights $\tilde{\bs{\chi}}_i (\by)$, for all $\by\in \cB_k$. Similar expressions hold for other derivatives of $K$.
\end{theorem}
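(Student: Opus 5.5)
The plan is to write the directional derivative as a limit of difference quotients. Each difference quotient is a linear combination of two translates of $K$ that are already approximable in the spreading span. Closedness of the approximable set in the sup norm then passes the approximation to the limit.

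Fix a bin $\cB_k$ and write $V=\opnm{span}\{K(\cdot-\bz_i)\,|\,\bz_i\in\overline{\cB}_k\}\subset C_b(\cW_k)$. Here $K$ is bounded on $\cW_k$ because $\cW_k$ stays outside the ball $B$ around the bin, so $V$ really sits in $C_b(\cW_k)$. By hypothesis, for every $\delta>0$ there is a spreading grid (a choice of $\nspread$) such that $\opnm{dist}_{C_b(\cW_k)}(K(\cdot-\by'),V)\le\delta$ for every $\by'$ in a slightly enlarged bin. The enlargement is needed because the shifted points $\by+t\bn(\by)$ may leave $\cB_k$ for small $t$. I would handle this either by invoking the hypothesis on a bin of half side $h+t_0$, which is still well separated from $\cW_k$, or by noting that $\cW_k$ only requires distance $\rho_1=2\sqrt d h$ and the margin in the remark allows small enlargements.

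For $0<t\le t_0$, set
\[
D_t(\bx)=\frac{K(\bx-\by-t\bn(\by))-K(\bx-\by)}{-t}.
\]
Pick weights $\bs\chi(\by+t\bn)$ and $\bs\chi(\by)$ from \cref{eq:approx_kern}. Their scaled difference $\tilde{\bs\chi}^{(t)}=(\bs\chi(\by)-\bs\chi(\by+t\bn))/t$ gives $\opnm{dist}(D_t,V)\le 2\delta/t$. This blow-up by $1/t$ is the main obstacle: a fixed approximation tolerance does not survive division by $t$.

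To overcome it, I would first choose $t$ so that $\|D_t-\bn\cdot\nabla_{\by}K(\cdot-\by)\|_{C_b(\cW_k)}<\epsilon/2$, using the assumed uniform convergence of the difference quotient on $\cW_k$. If needed I would make this uniform over $\by\in\cB_k$ by a compactness argument on $\overline{\cB}_k\times$ (a large ball), together with decay of $K$ at infinity, so that $\nabla K$ is uniformly continuous on the relevant region. Only after $t$ is fixed would I choose $\delta=\epsilon t/4$ and take the spreading grid given by the hypothesis for that $\delta$. This ordering of quantifiers is exactly why the statement says "there exists a spreading grid" rather than asserting that the same grid works. The triangle inequality then gives \cref{eq:approx_kern_der} with $\tilde{\bs\chi}_i(\by)=\tilde{\bs\chi}^{(t)}_i$.

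The closed-set remark preceding the theorem is the abstract form of this argument: the $\epsilon$-neighbourhood of $V$ is closed, and $D_t$ lies in it for a fine enough grid. For other derivatives (higher order, or target derivatives), I would iterate the same argument with higher-order finite differences, choosing the step first and the grid tolerance $\delta\sim\epsilon t^m$ afterwards.
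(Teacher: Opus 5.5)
Your proof is correct and follows the paper's route. The source derivative is the $C_b(\cW_k)$-limit of difference quotients of translates of $K$, and approximability by the spreading span passes to that limit; your explicit ordering (fix $t$ first, then refine the grid to tolerance $\delta=\epsilon t/4$) is what makes the paper's one-line closedness remark rigorous, since for a fixed span the quotients only satisfy your $2\delta/t$ bound. The enlarged-bin hedge is unnecessary: write $\bn(\by)=\sum_j n_j\bs{e}_j$ and take one-sided coordinate differences pointing into the axis-aligned box $\cB_k$, so every shifted source stays inside $\cB_k$ and the hypothesis is only used on $\cB_k$ itself.
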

For kernels that grow at infinity, the same argument works if we weight the norm of $C(\cW_k)$ appropriately.
Numerically, we first determine the number of proxy points required without any derivatives using \cref{alg:nspread_nproxy}, then find the weights $\tilde{\bs\chi}_i$ by enforcing \cref{eq:approx_kern_der} at those proxy points. After they are known, we can compute $N$-body sums using a very slight modification of \cref{alg:compute_correction} and \cref{eq:pcfft_approx}.

To test the effect of these derivatives on the accuracy of our algorithm, we run two experiments. In both, we measure the relative $\ell_\infty$ error in the $N$-body calculation for $2{,}000$ sources and $2{,}017$ targets with a uniform random distribution in the unit box. In the first experiment (\cref{subfig:accuracy_lap}), we use our algorithm to compute $N$-body sums for derivatives of the Laplace Green's function, $G_L$,
\begin{gather*}
    k_S(\bx,\by)=G_L(\bx-\by),\quad k_D(\bx,\by)=-\bn(\by)\cdot \nabla G_L(\bx-\by),\\ 
    \text{and}\quad k_{D'}(\bx,\by)= -\bn(\bx)\cdot \nabla (\bn(\by)\cdot \nabla G_L(\bx-\by)).
\end{gather*}
We see that the derivatives can decrease the accuracy, likely because the spread charges are larger, but the error still scales with the tolerance. 

As a second experiment, we explore the accuracy in the kernels that arise when using a standard integral equation method for the flexural wave equation with clamped plate boundary conditions \cite{nekrasov2025boundary,lindsay2018boundary,farkas1989mathematical}. If $\Omega$ is a compact subset of $\bbR^2$ with a smooth boundary, then these equations can be stated as
\begin{align}
\left\{
    \begin{aligned}
    (\Delta^2-k^4)u&=0 &&\quad \text{in } \bbR^2\setminus\Omega,\\
    u&=g_1 && \quad \text{on } \partial\Omega,\\
    \partial_{\bn} u&=g_2 && \quad \text{on } \partial\Omega,\\
\end{aligned}
\right.
\label{eq:biharmonic}
\end{align}
where $\bn$ is the unit normal to $\partial\Omega$, and $g_1$ and $g_2$ are Dirichlet and Neumann boundary data, respectively. The integral equation is built out of the flexural Green's function, which satisfies
\begin{equation*}
    (\Delta^2-k^4)G_F=\delta.
\end{equation*}
The kernels that occur in the integral equation formulation are
\begin{gather*}
    k_{\opnm{BC}}=\begin{bmatrix}
       G_F\\
        \partial_{\bn(\bx)} G_F
    \end{bmatrix},\quad k_{\opnm{rep}}=\begin{bmatrix}
\partial_{\bn(\by)}^3G_F+3\partial_{\bn(\by)}\partial_{\bs{\tau}(\by)}^2G_F&
        -\partial_{\bn(\by)}^2G_F+\partial_{\bs{\tau}(\by)}^2G_F
    \end{bmatrix},\\
    \text{and} \quad k_{\opnm{BIE}}=\begin{bmatrix}
\partial_{\bn(\by)}^3G_F+3\partial_{\bn(\by)}\partial_{\bs{\tau}(\by)}^2G_F&
        -\partial_{\bn(\by)}^2G_F+\partial_{\bs{\tau}(\by)}^2G_F\\
        \partial_{\bn(\bx)}\partial_{\bn(\by)}^3G_F+3\partial_{\bn(\bx)}\partial_{\bn(\by)}\partial_{\bs{\tau}(\by)}^2G_F&
        -\partial_{\bn(\bx)}\partial_{\bn(\by)}^2G_F+\partial_{\bn(\bx)}\partial_{\bs{\tau}(\by)}^2G_F
    \end{bmatrix},
\end{gather*}
where $\bs\tau$ is the unit tangent to $\partial\Omega$. In \cref{subfig:accuracy_flex2d} we see that we achieve accuracy that scales with the tolerance for all of these combinations of derivatives, though the accuracy is not always below the tolerance.
\begin{figure}[h!]
    \centering
    \begin{subfigure}[b]{0.32\textwidth}
        \centering
        \includegraphics[width=\textwidth]{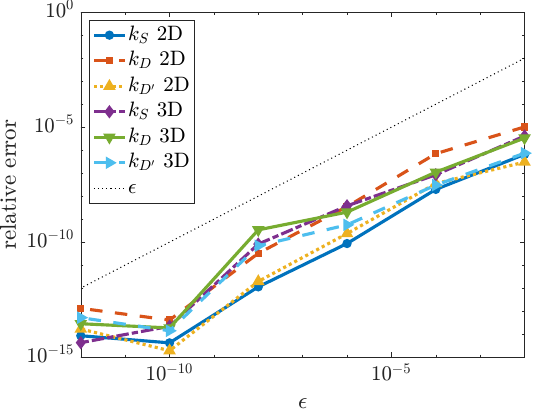}
        \caption{Laplace kernels}
        \label{subfig:accuracy_lap}
    \end{subfigure}
    \hfill
    \begin{subfigure}[b]{0.32\textwidth}
        \centering
        \includegraphics[width=\textwidth]{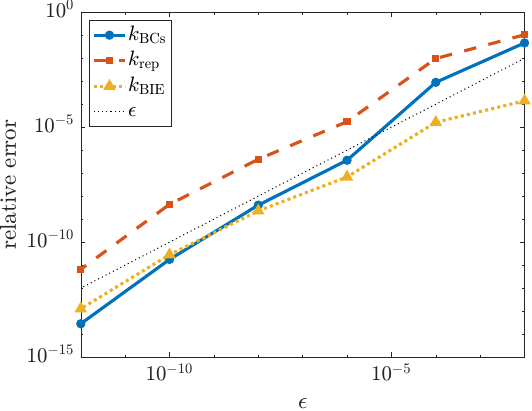}
        \caption{Clamped plate flexural problems}
        \label{subfig:accuracy_flex2d}
    \end{subfigure}
        \begin{subfigure}[b]{0.32\textwidth}
        \centering
        \includegraphics[width=\textwidth]{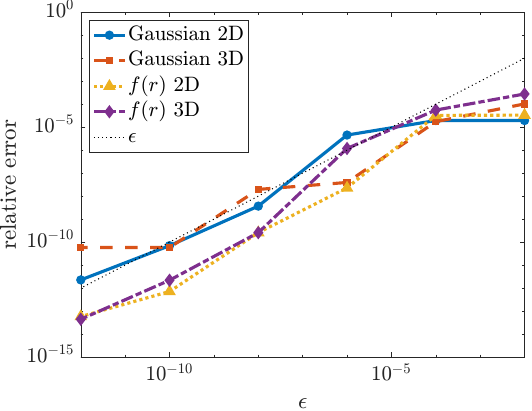}
        \caption{Gaussian and $f(r)=\frac{\sin r}{r+r_1^2}$}
        \label{subfig:accuracy_aniso}
    \end{subfigure}
    \caption{Relative $\ell_\infty$ error in the $N$-body sum versus the proxy tolerance $\epsilon$ for derivatives of the Green's function for Laplace's equation in 2D and 3D (\ref{subfig:accuracy_lap}), in a representation for the flexural boundary value problem with clamped plate boundary conditions and wavenumber $k=10$ (\ref{subfig:accuracy_flex2d}), and the anistropic examples (\ref{subfig:accuracy_aniso}).}
    \label{fig:accuracy}
\end{figure}

\section{Complexity}\label{sec:comp}

Here we will discuss the complexity of the precorrected-FFT method as it scales with $\npts$. In the previous section, we saw that the spreading parameters $(\nspread, \nproxy, \nshell)$ scale at most polylogarithmically with $\epsilon$. These spreading parameters are purely a function of $\epsilon$ and the kernel with no dependence on $\npts$. Here we analyze the complexity for fixed settings of the user inputs $\epsilon, \nnbr$, and $K(\cdot)$.

The complexity of executing \cref{alg:pcfft_apply} is $O(\npts \log\npts)$. This can be seen because $\bA$ has $O(\npts \nnbr)$ entries, the regular grid has $O(\npts \nspread^d / \nnbr)$ grid points, and each sparse spreading operator has $O(\npts \nspread^d )$ nonzero entries. 
The dominant cost in \cref{alg:pcfft_apply} is therefore the FFT/IFFT pair, so the overall complexity scales as $O(\npts \log \npts)$.
The following theorem shows that the precomputation steps have complexity $O(\npts \log \npts)$ as well.

\begin{theorem}[\cite{phillips2002precorrected}]\label{thm:comp}
    Suppose $\nnbr, \nspread, \nproxy,$ and $\nshell$ are fixed.
    For points spread uniformly in a compact subset of $\bbR^d$, the pre-computation cost of before we can run \Cref{alg:pcfft_apply} is $$O\lp\npts\nspread^d/\nnbr\cdot \log(\npts\nspread^d/\nnbr ) + \npts \nspread^{d}(\nproxy\nshell+\nspread^{d}+\nnbr)\rp.$$
    Further, the cost a single evaluation of the $N$-body calculation using \Cref{alg:pcfft_apply} is $$O(\npts\nspread^d/\nnbr \cdot\log(\npts\nspread^d/\nnbr )+ \npts\nnbr).$$
\end{theorem}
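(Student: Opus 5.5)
We prove the statement by counting, one at a time, the cost of each object that is precomputed before \cref{alg:pcfft_apply} can run, and then the cost of one application. The counting rests on a few size estimates. Points are spread uniformly over a compact set, and $h$ is chosen so that each spreading bin holds about $\nnbr$ points. Hence there are $N_{\text{bin}}=O(\npts/\nnbr)$ occupied bins, and only $O(1)$ bins lie in any neighborhood $\cN_k$, since nearness means the bin centers lie within $2\rho_1=4\sqrt d h$, a fixed multiple of the bin size. Each bin contributes $O(\nspread^d)$ grid points, so the regular grid has $M=O(\npts\nspread^d/\nnbr)$ points; for a uniform distribution the bounding box of the grid is comparable to the occupied region. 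The hyperoctree used to pick $h$ can be built in $O(\npts\log\npts)$ time, or $O(\npts)$ for uniform data, which is absorbed into the other terms.

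The precomputation has four parts, which we cost in turn.
\begin{enumerate}
\item \emph{Spreading weights.} For each bin we form $\bK_{\bp\leftarrow\bz}$, of size $(\nproxy\nshell)\times\nspread^d$. By translation invariance this matrix and its complete orthogonal decomposition are the same for every bin, so the decomposition is done once and its cost does not depend on $\npts$. Then, for each point $\by$, we evaluate $\bK_{\bp\leftarrow\by}$ at cost $O(\nproxy\nshell)$ and apply the stored pseudoinverse at cost $O(\nspread^d\nproxy\nshell)$. Summed over all points, assembling $\bS_{\bx}$ and $\bS_{\by}$ costs $O(\npts\nspread^d\nproxy\nshell)$.
\item \emph{Kernel transform.} We evaluate $K$ on the zero-padded grid of size $O(M)$ and take its FFT, which costs $O(M\log M)$. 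This is the first term of the bound.
\item \emph{Neighborhood template.} The matrix $\bK_{\cN\leftarrow\overline{\cB}}$ has $O(\nspread^d)\times O(\nspread^d)$ entries and is built once.
\item \emph{Correction matrix.} For each bin $k$, \cref{alg:compute_correction} computes $\bD_k$ at cost $O(\nnbr\cdot\nnbr)$, because $|\cI_k|=O(\nnbr)$ and $|\cJ_k|=O(\nnbr)$. It also forms $\bS_{\bx}(:,\cI_k)^\top\bK_{\cN\leftarrow\overline{\cB}}\bS_{\by}(:,\cJ_k)$. We multiply right to left. The first product is the $O(\nspread^d)\times O(\nspread^d)$ template times a $O(\nspread^d)\times\nnbr$ block, costing $O(\nspread^{2d}\nnbr)$. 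Applying the sparse $\bS_{\bx}$ block to the result costs $O(\nnbr^2\nspread^d)$. Summing over $N_{\text{bin}}=O(\npts/\nnbr)$ bins gives $O(\npts\nspread^{2d}+\npts\nnbr\nspread^d)$.
\end{enumerate}
Adding the four parts gives $O\bigl(M\log M+\npts\nspread^d(\nproxy\nshell+\nspread^d+\nnbr)\bigr)$, which is the claimed precomputation bound.

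For a single evaluation, applying $\bS_{\by}$ and $\bS_{\bx}^\top$ costs $O(\npts\nspread^d)$. The FFT, the diagonal scaling by $\widehat{\bK_{\bz\leftarrow\bz}}$, and the IFFT cost $O(M\log M)$. Applying $\bA$ costs $O(\npts\nnbr)$, since $\bA$ has that many nonzeros. The spreading term $\npts\nspread^d=M\nnbr$ is dominated by $M\log M+\npts\nnbr$ once $\log M\gtrsim\nnbr$. For fixed parameters we can in any case absorb it with a constant, and we state the bound in the form given.

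The main obstacle is the correction step, specifically justifying that $\cN_k$ holds only $O(\nnbr)$ points and $O(1)$ boxes. For that we need that a uniform distribution gives bounded bin occupancy, up to constants, and that the choice of $h$ in \cref{sec:findpxy} makes the occupancy comparable to $\nnbr$. We will argue this directly from the uniform density assumption and the geometry in \cref{fig:spreading_template}. The remaining steps are routine counting.
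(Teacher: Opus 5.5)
Your proposal is correct and follows the paper's proof essentially step for step. It uses the same decomposition: spreading weights via one shared pseudoinverse applied per point at total cost $O(\npts\nspread^d\nproxy\nshell)$, the kernel FFT on $O(\npts\nspread^d/\nnbr)$ grid points, and near-field corrections costed per bin and summed over $O(\npts/\nnbr)$ bins. Your apply-step accounting is actually more careful than the paper's, which credits the spreading matrices with only $O(\npts\nnbr)$ entries; you correctly keep the $O(\npts\nspread^d)$ spreading cost and note that it is absorbed into the stated bound only because the parameters are fixed (or once $\log M\gtrsim\nnbr$).
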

\begin{proof}
    The precomputation involves four mains steps.
    \begin{enumerate}
        \item Determining $\npxy$ and $\nspread$ through Algorithm~\ref{alg:nspread_nproxy}, which is independent of $\npts$, but can have non-trivial cost for generic kernels.
        \item Computing the spreading operators $\bS_{\by}$ and $\bS_{\bx}$ requires solving $\npts$ least-squares problems with the same $\nspread^d\times \npxy$ matrix. Since $\npts$ is large, the dominant cost will be $O(\npts\cdot \nproxy\cdot \nshell\cdot  \nspread^{d})$.
\item
    Computing the near-field corrections will cost $O(\npts\cdot \nnbr)$ to get the correct interactions and $O(\npts\cdot \nspread^{d}\cdot(\nnbr+\nspread^{d}))$ to subtract the incorrect value given by the spreading operators.
\item Since the points are spread quasi-uniformly the number of spreading bins will be $O(\npts/\nnbr)$.
    Evaluating the Fourier transform of the kernel will therefore scale like $$O(\npts\nspread^d/\nnbr \cdot\log(\npts\nspread^d/\nnbr )),$$ as the number of grid points scale with the number of bins and the number of grid points per box, $\nspread^d$.
    \end{enumerate}
   Adding up the costs of these steps gives the result.

   In a single application of \Cref{alg:pcfft_apply}, we apply two sparse matrices with $O(\npts\cdot \nnbr)$ entries, use two FFTs, and apply a diagonal matrix. Summing these costs gives the cost of a single matrix apply listed above.
\end{proof}
The above theorem indicates that the algorithm is fastest when $\nspread^d\approx \nnbr \nshell$, though the optimal choice will depend on the computer architecture and memory movement considerations.
It is easy to see that the memory required to store $\bS_{\by}$ and $\bS_{\bx}$ are $\npts\nspread^d$, the cost to store the corrections is $O(\npts \nnbr)$, and the cost to store the grid variables is $O\lp\npts\nspread^d/\nnbr\rp.$ Adding these up gives the following theorem.
\begin{theorem}
    The memory cost to use \Cref{alg:pcfft_apply} is $O\lp \npts \nspread^d + \npts \nnbr + \npts\nspread^d/\nnbr\rp$.
\end{theorem}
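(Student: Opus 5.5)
The memory bound follows from adding up the storage of the objects that \Cref{alg:pcfft_apply} takes as input and the work arrays it creates. These are the two spreading matrices $\bS_{\bx}$ and $\bS_{\by}$, the correction matrix $\bA$, the transformed kernel $\widehat{\bK_{\bz \leftarrow \bz}}$, and the grid vectors produced during one application. I would bound each with the same counting used in the proof of \Cref{thm:comp}, keeping the assumption that points are quasi-uniformly distributed and that $\nnbr$, $\nspread$, $\nproxy$, $\nshell$ are fixed.

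\textbf{Spreading matrices.} Each point $\by_j$ (or $\bx_i$) lies in exactly one spreading bin $\cB_k$. It is spread only to the grid points in the spreading box $\overline{\cB}_k$, of which there are $\nspread^d$. So each column of $\bS_{\by}$, $\bS_{\bx}$ has $\nspread^d$ nonzeros, and storing both costs $O(\npts\nspread^d)$ in sparse format.

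\textbf{Correction matrix.} $\bA$ is nonzero only in the blocks $\bA(\cI_k,\cJ_k)$. The sources in $\cB_k$ number $O(\nnbr)$ by the choice of $h$. The targets in the neighborhood $\cN_k$ number $O(\nnbr)$ as well, because the neighborhood consists of a bounded number of bins, depending only on $d$ since the nearness radius is $2\rho_1 = 4\sqrt{d}h$. Summing over the $O(\npts/\nnbr)$ bins gives $O(\npts/\nnbr\cdot\nnbr^2)=O(\npts\nnbr)$ entries. The template $\bK_{\cN\leftarrow\overline{\cB}}$ is stored only once, so its size is independent of $\npts$. It is either dominated or absorbed as a constant.

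\textbf{Grid arrays.} The number of bins is $O(\npts/\nnbr)$, and each bin contributes $O(\nspread^d)$ grid points, since adjacent boxes overlap by a bounded factor. The zero padding for the FFT increases this count by at most a factor $2^d$. Hence $\widehat{\bK_{\bz \leftarrow \bz}}$ and the vectors $\bS_{\by}\bs\sigma$ and its FFT and IFFT each need $O(\npts\nspread^d/\nnbr)$ storage. The output $\bu$ needs $O(\npts)$, which is dominated by the other terms.

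Summing the three contributions gives the stated bound. The only step needing care is the neighborhood count in the correction term. The bound $O(\nnbr)$ targets per neighborhood relies on quasi-uniformity, and I would invoke it exactly as in item 3 of the proof of \Cref{thm:comp}. A highly clustered distribution could break this count, but that case is excluded by the same standing assumption.
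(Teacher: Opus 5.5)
Your proposal is correct and follows the paper's argument. The paper simply adds the $O(\npts\nspread^d)$ storage of $\bS_{\bx},\bS_{\by}$, the $O(\npts\nnbr)$ entries of $\bA$, and the $O(\npts\nspread^d/\nnbr)$ grid variables; your extra bookkeeping (the bounded number of neighbor bins, the zero-padding factor, the $\npts$-independent template, and explicitly invoking the quasi-uniformity assumption of \Cref{thm:comp}) fills in what the paper leaves as ``easy to see.''
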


\subsection{Scalability}\label{sec:scalability}

In this section, we measure the performance of our method as $\npts$ grows, and compare the performance with other fast $N$-body summation methods. 

\paragraph{Compute environment} We perform our tests on a MacBook Pro, with an Apple M4 Max processor, 64GB of random-access memory, and MacOS version $26{.}5$. 
For all experiments, we use the Apple Accelerate framework as the default BLAS implementation.
Most experiments are driven from the MATLAB compute environment version R2025b.
The 2D and 3D scaling experiments are conducted using a single thread, and the other experiments and numerical examples are conducted using multi-threading.

\paragraph{2D scaling}
First, we experiment in two spatial dimensions, when $K$ is the single-layer kernel from the freespace Helmholtz equation:
\begin{align*}
    K(\bx, \by) &= \frac{i}{4} H_0^{(1)}\left( k \| \bx - \by \| \right),
\end{align*}
where $H_0^{(1)}$ is the Hankel function of the first kind and order $0$, and the wavenumber $k=50$.
We choose a ``quasi-uniform'' distribution of source and target points by successively adding smooth starfish-shaped curves to the domain $[-10, 10]^2$. Each starfish-shaped curve is discretized with about $1{,}200$ points. We compute \cref{eq:Nbody} with tolerance $\epsilon = 10^{-5}$. 
As we increase the number of source and target points, we measure two notions of runtime. 
First, we measure the time required to compute the full sum \cref{eq:Nbody}. 
Second, we measure the time required to apply a new charge vector $\bsigma$, after any necessary precomputations are finished. 
This notion of runtime is pertinent in an integral equation setting, when \cref{eq:Nbody} must be evaluated for dozens or hundreds of new charge vectors $\bsigma$ over the course of an iterative algorithm.

\Cref{fig:2d_scaling} shows the result of this comparison experiment. We compare the performance of different methods: \texttt{PCFFT}, our pure-MATLAB implementation of our kernel-independent pre-corrected FFT method; \texttt{FMM2D} \cite{FMM2D_2023}, a Fortran code called from MATLAB implementing the fast multipole method for the Helmholtz kernel; and \texttt{IFMM} \cite{ho_flam_2020} a pure-MATLAB implementation the interpolative fast multipole method \cite{Martinsson_Rokhlin_2007}, run with default flags and far-field proxy functions.
Among these methods, \texttt{FMM2D} is the only one designed with \textit{a priori} knowledge of the Helmholtz kernel; the other methods are ``kernel independent''. \texttt{FMM2D}'s low total runtimes observed in \cref{subfig:2d_scaling_total} can be attributed to the fact that this method does not need to perform expensive precomputation steps to compress the kernel, which are required by the other methods.
\Cref{subfig:2d_scaling_apply} shows that once the pre-computation is completed \texttt{PCFFT} is able to apply the charge vector an order of magnitude faster than other methods.
Preliminary experiments were conducted with \texttt{BBFMM2D} \cite{Wang_Ambikasaran_2013}, a C++ implementation of the black-box fast multipole method \cite{Fong_Darve_2009}, but this method was unable to accurately compress the highly-oscillatory Helmholtz kernel for this problem.

\begin{figure*}[t!]
    \centering
    \begin{subfigure}[t]{0.45\textwidth}
        \centering
        \includegraphics[width=\textwidth]{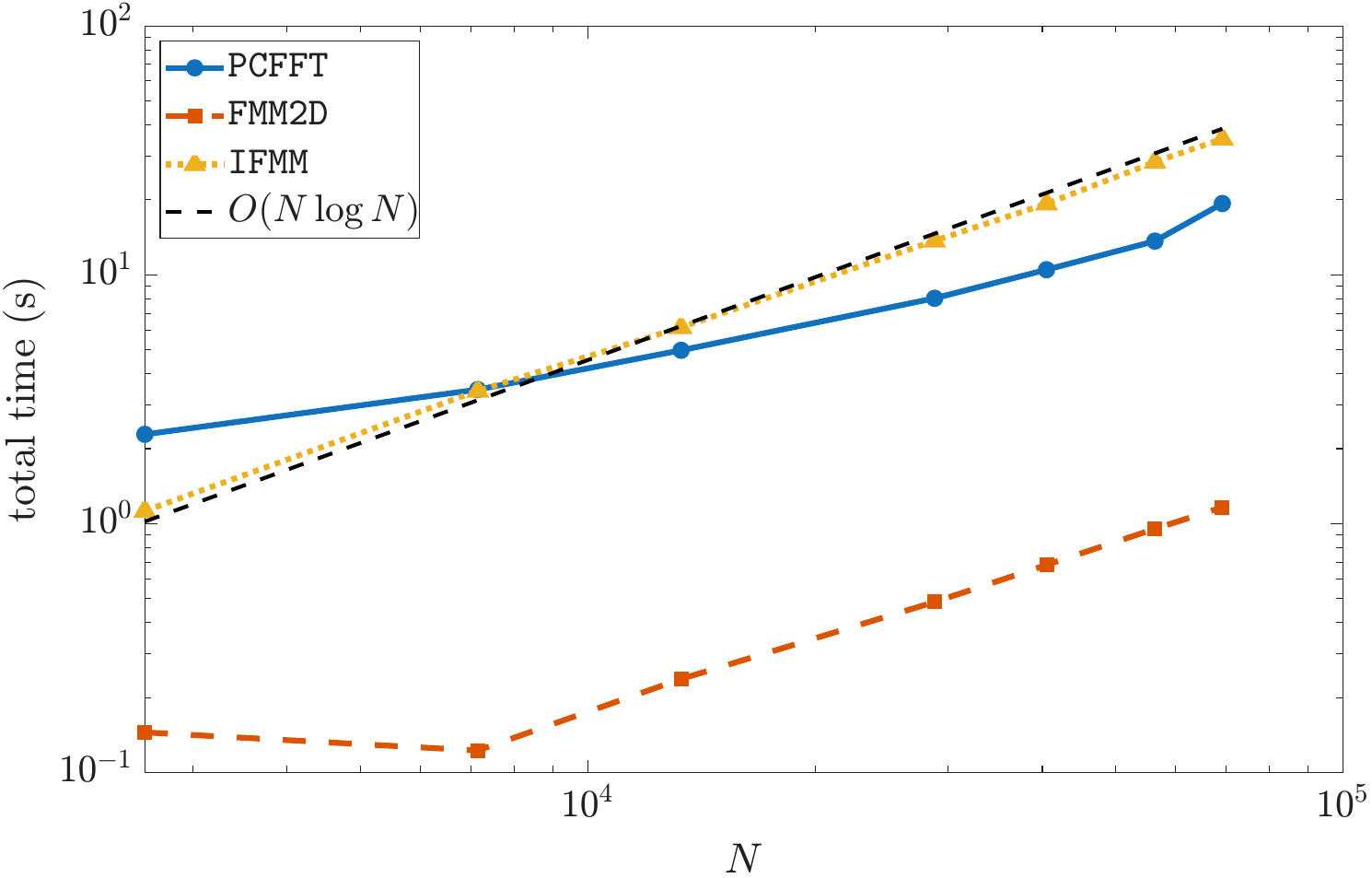}
        \caption{Timing the full computation $\{ \bx, \by, \bsigma \} \to \bu$.}
        \label{subfig:2d_scaling_total}
    \end{subfigure}
    ~
    \begin{subfigure}[t]{0.45\textwidth}
        \centering
        \includegraphics[width=\textwidth]{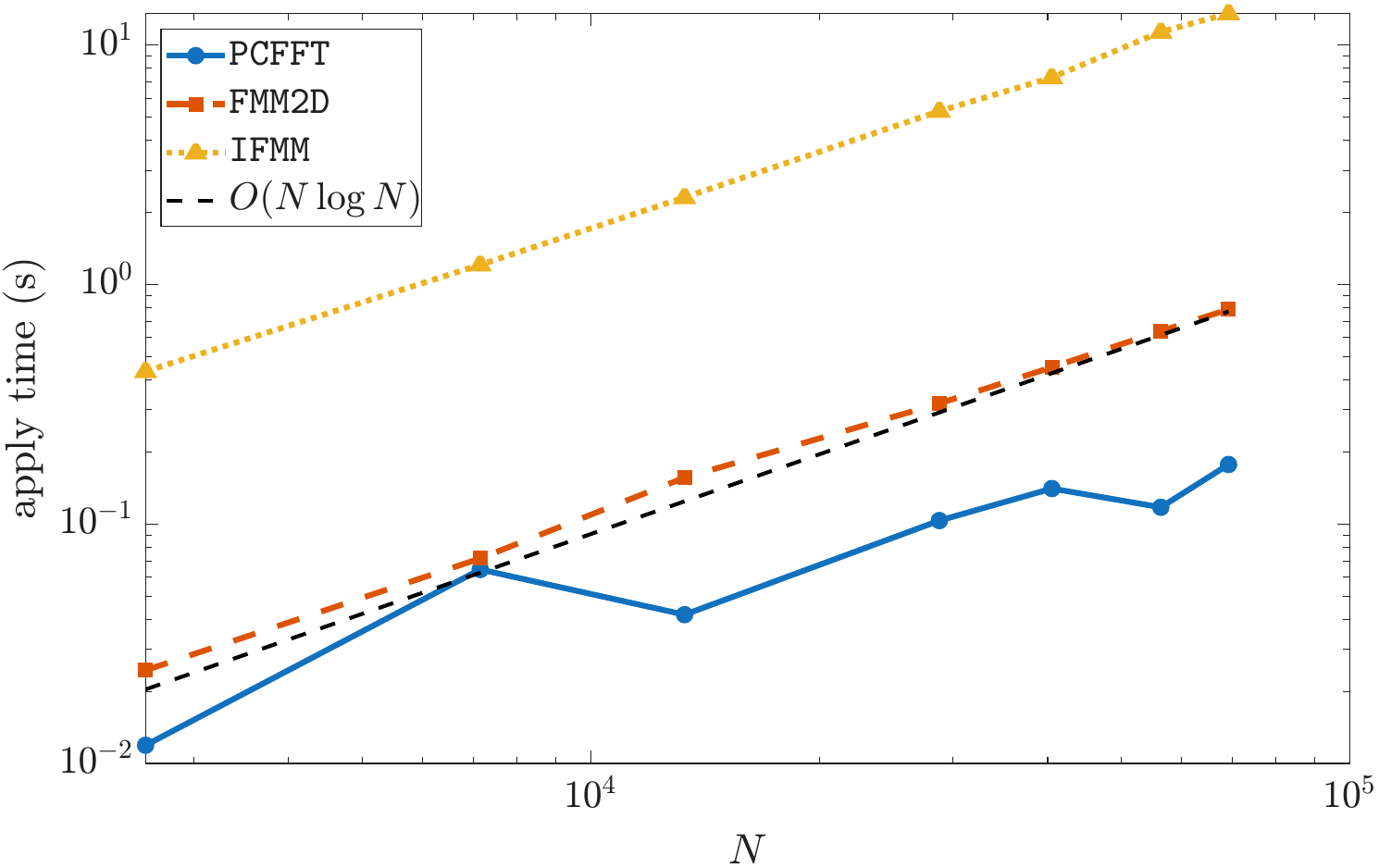}
        \caption{Timing the apply step $\bsigma \to \bu$.}
        \label{subfig:2d_scaling_apply}
    \end{subfigure}
    \caption{Comparing 2D fast summation algorithms computing an $N$-body sum with a Helmholtz kernel and error tolerance $\epsilon = 10^{-5}$. 
    \Cref{subfig:2d_scaling_total} shows the total time to compute the $N$-body sum. 
    The PCFFT method's run time is dominated by the precomputation of the correction matrix. 
    \Cref{subfig:2d_scaling_apply} shows the time spent applying the charge vector $\bs{\sigma}$. 
    PCFFT is much faster in this stage because it relies on optimized FFT codes and sparse linear algebra. }
    \label{fig:2d_scaling}
\end{figure*}

\paragraph{3D scaling}
In three spatial dimensions, we run two similar comparisons. First, we investigate approximating sums with a three-dimensional single-layer Laplace kernel,
\begin{align*}
    K(\bx, \by) &= \frac{1}{4 \pi \|\bx - \by\|},
\end{align*}
with uniformly-sampled source and target point distributions over the box $[-1, 1]^3$. 
This experiment allows us to compare the scaling of our method with other highly-optimized methods designed for fast electrostatic calculations. 
Second, we investigating approximating sums with a three-dimensional single-layer Helmholtz kernel,
\begin{align*}
    K(\bx, \by) &= \frac{e^{ik \|\bx - \by\| }}{i k \|\bx - \by\|},
\end{align*}
with source and target point distributions generated by a triangulation of the surface shown in \cref{fig:var_helmbel}.
We start with a coarse triangulation and generate larger problem sizes by randomly choosing patches to subdivide.
This experiment allows us to measure the scaling of our method in realistic integral equation settings.

We compare the performance of different methods: \texttt{PCFFT} and \texttt{IFMM}, which provide a single interface for both two- and three-dimensional problems; \texttt{FMM3D} \cite{FMM3D_2026}, a Fortran code called from MATLAB implementing the fast multipole method for the Helmholtz kernel; \texttt{ESP} \cite{Greengard_Jiang_Blackwell}, a C++/Fortran code implementing a fast Ewald summation method \cite{liangAcceleratingFastEwald2025}; \texttt{DMK} \cite{Greengard_Jiang_Blackwell}, a C++/Fortran code implementing the dual-space multilevel kernel method \cite{jiangDualspaceMultilevelKernelsplitting2025}; and \texttt{BBFMM3D} \cite{Wang}, a C++ implementation of the black-box fast multipole method.

In \cref{fig:3d_scaling_laplace}, we show the results of the 3D scaling experiment with the Laplace kernel. We see the optimized codes designed specifically for the Laplace kernel are much more performant than the kernel-independent codes, \texttt{PCFFT}, \texttt{IFMM}, and \texttt{BBFMM}. 
Again, \texttt{PCFFT} has the fastest apply step of these algorithms. In \cref{fig:3d_scaling}, we show the results of the 3D scaling experiment with the Helmholtz kernel. 
In this setting, the differences in apply step timings is more stark: after a precomputation step, \texttt{PCFFT} can apply the compressed kernel nearly an order of magnitude faster than the other methods. 
In an integral equation setting, where repeated applications of the kernel operator are required, the cost of the precomputation step will be quickly amortized.
In the 3D Helmholtz scaling test, we performed preliminary experiments with \texttt{BBFMM3D} but found it could not accurately compress this oscillatory kernel.

\begin{figure*}[t!]
    \centering
    \begin{subfigure}[t]{0.45\textwidth}
        \centering
        \includegraphics[width=\textwidth]{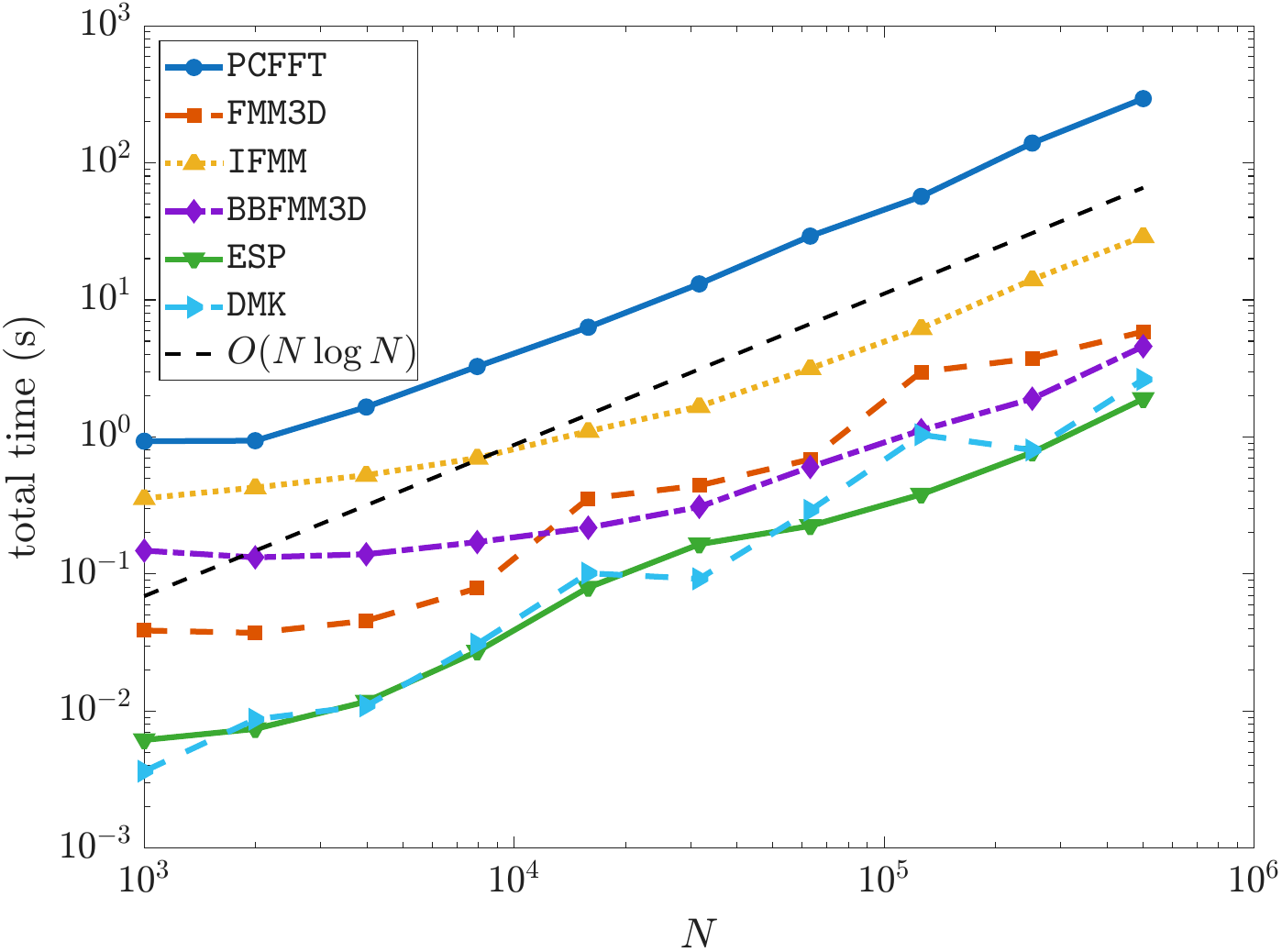}
        \caption{Timing the full computation $\{ \bx, \by, \bsigma \} \to \bu$.}
        \label{subfig:3d_scaling_total_lap}
    \end{subfigure}
    ~
    \begin{subfigure}[t]{0.45\textwidth}
        \centering
        \includegraphics[width=\textwidth]{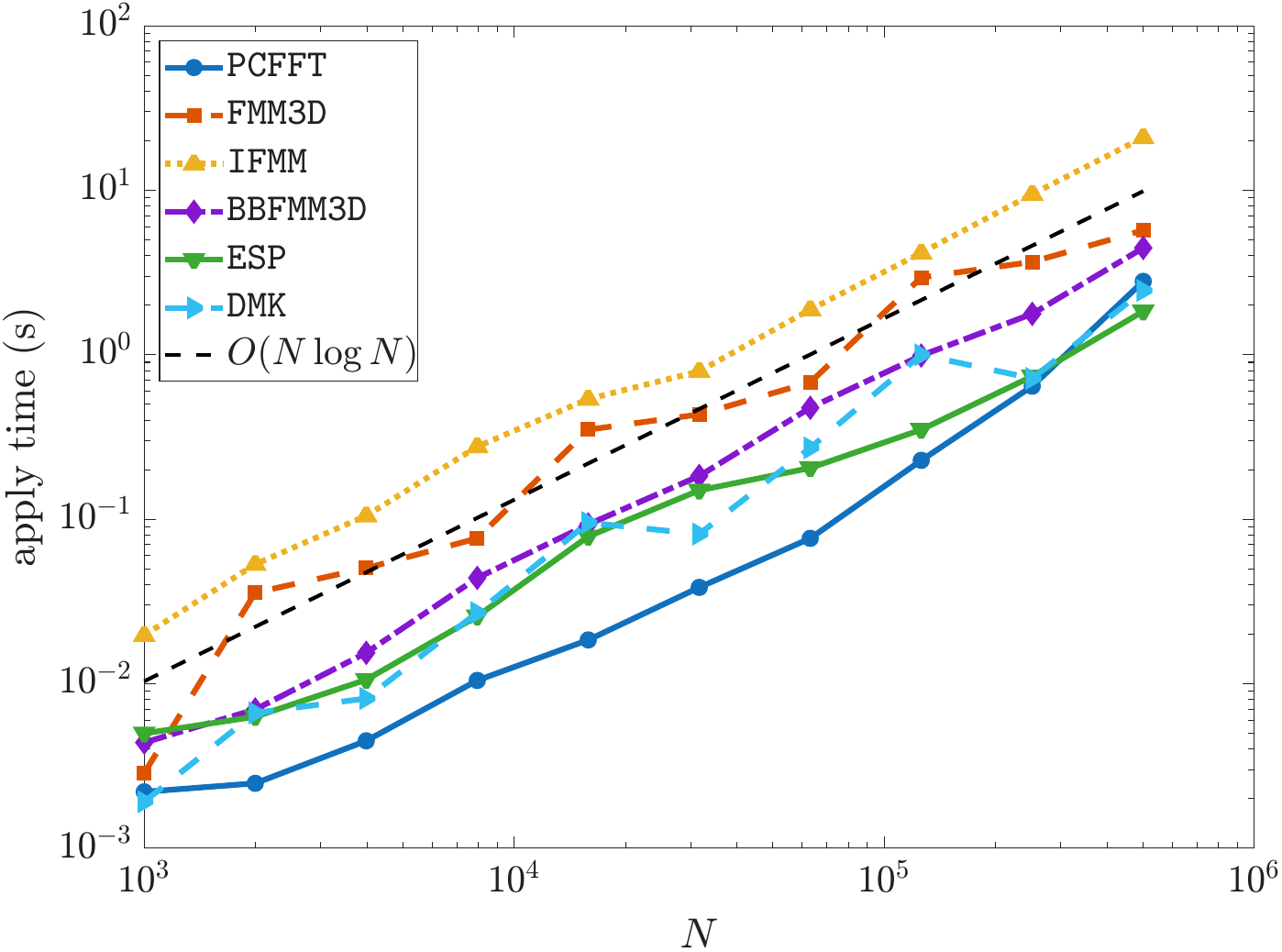}
        \caption{Timing the apply step $\bsigma \to \bu$.}
        \label{subfig:3d_scaling_apply_lap}
    \end{subfigure}
    \caption{Scaling comparison for the 3D Laplace kernel.
    \Cref{subfig:3d_scaling_total_lap} shows the total time to compute the $N$-body sum. 
    \texttt{FMM3D}, \texttt{DMK}, and \texttt{ESP} are all highly-optimized codes specifically designed for the Laplace kernel, and are much faster than the others because they avoid expensive precomputation steps.
    \Cref{subfig:3d_scaling_apply_lap} shows time spent applying the charge vector $\bs{\sigma}$. 
    \texttt{PCFFT} is much faster than the others in this stage because \cref{alg:pcfft_apply} relies on optimized FFT codes and sparse linear algebra. }
    \label{fig:3d_scaling_laplace}
\end{figure*}

\begin{figure*}[t!]
    \centering
    \begin{subfigure}[t]{0.45\textwidth}
        \centering
        \includegraphics[width=\textwidth]{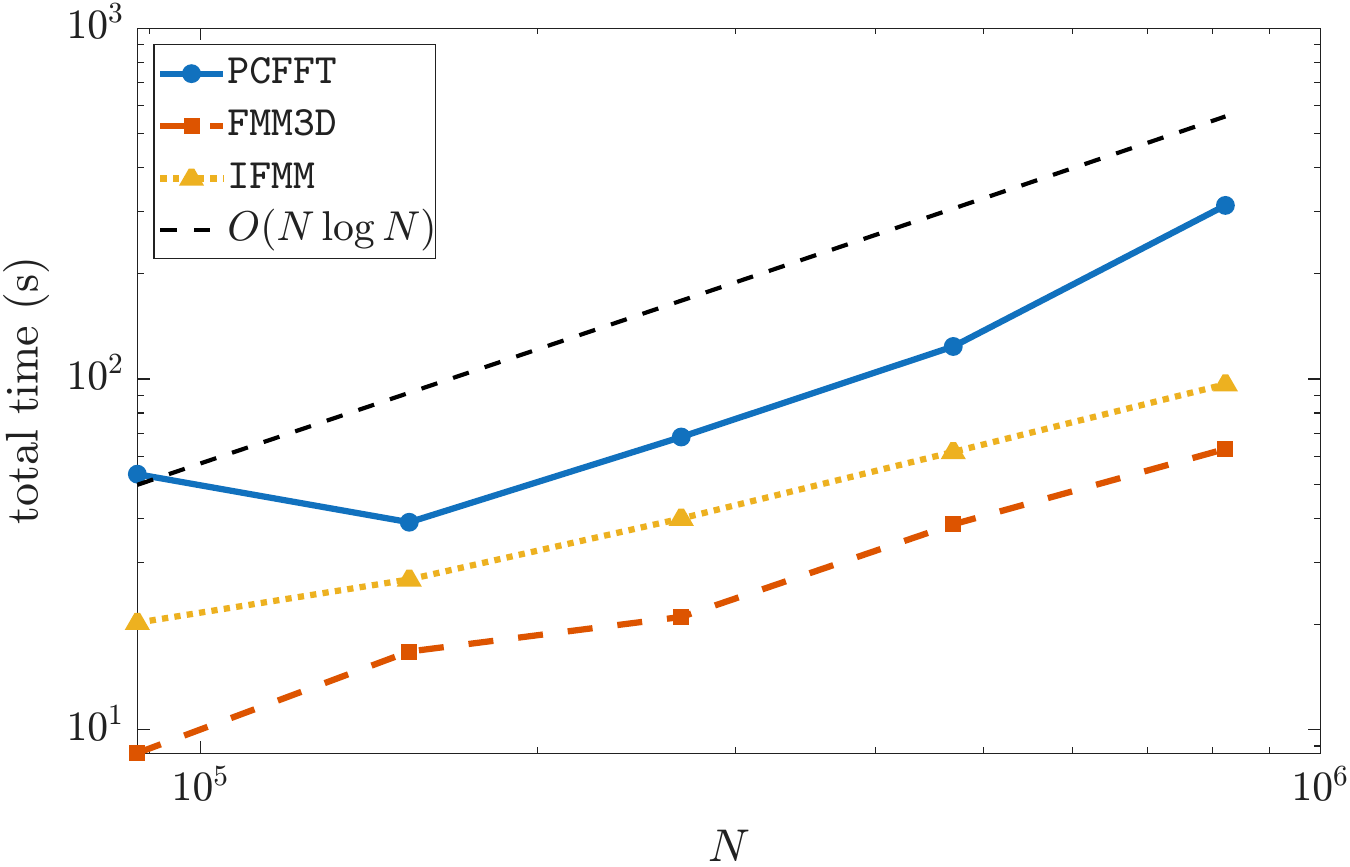}
        \caption{Timing the full computation $\{ \bx, \by, \bsigma \} \to \bu$.}
        \label{subfig:3d_scaling_total}
    \end{subfigure}
    ~
    \begin{subfigure}[t]{0.45\textwidth}
        \centering
        \includegraphics[width=\textwidth]{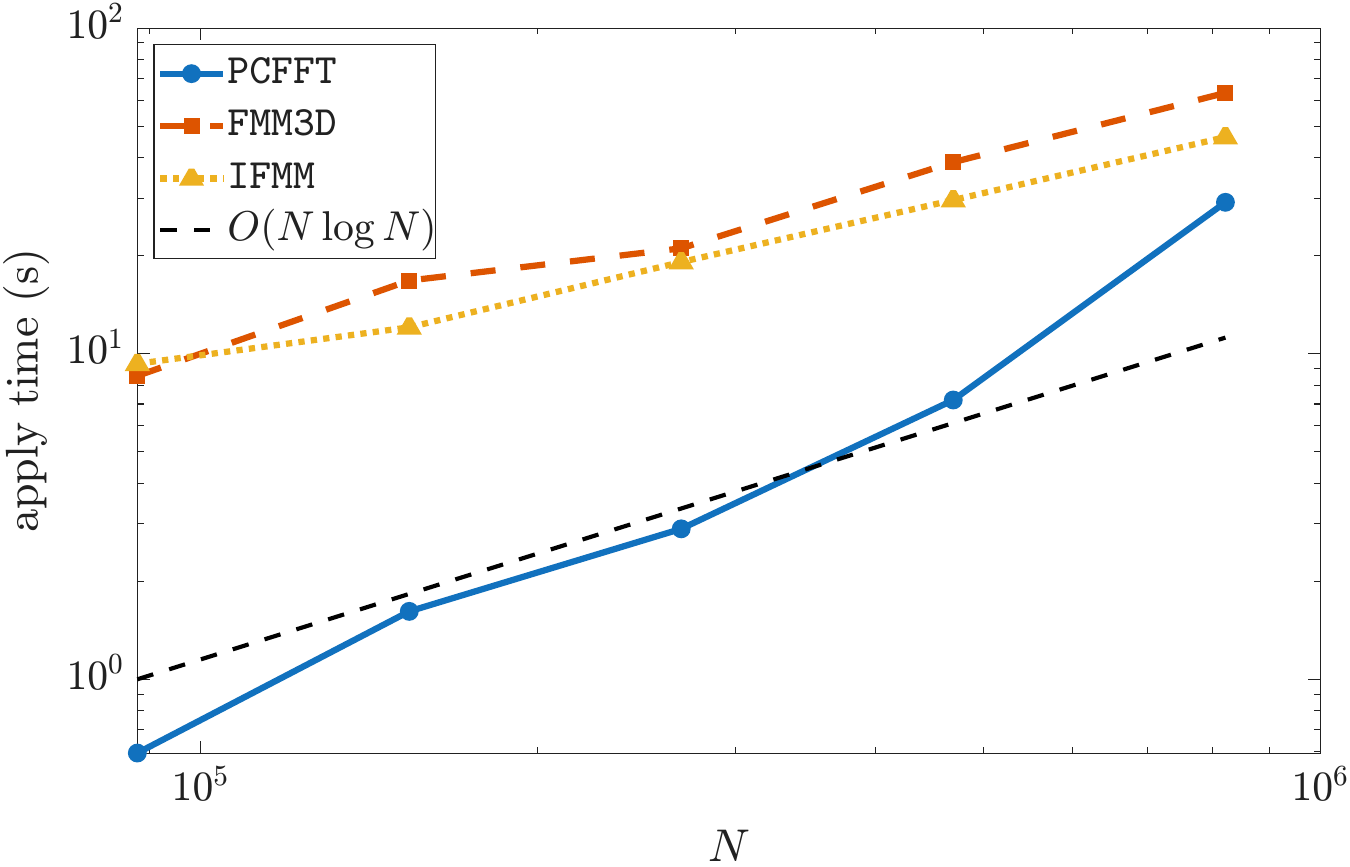}
        \caption{Timing the apply step $\bsigma \to \bu$.}
        \label{subfig:3d_scaling_apply}
    \end{subfigure}
    \caption{Scaling comparison for the 3D Helmholtz kernel. Comparing 3D fast summation algorithms computing an $N$-body sum with a Helmholtz kernel and error tolerance $\epsilon = 10^{-5}$. 
    \Cref{subfig:3d_scaling_total}: total time to compute the $N$-body sum. 
    The PCFFT method's time is dominated by the precomputation of the correction matrix. 
    \Cref{subfig:3d_scaling_apply}: time spent applying the charge vector $\bs{\sigma}$. 
    PCFFT is much faster in this stage because it relies on optimized FFT codes and sparse linear algebra. }
    \label{fig:3d_scaling}
\end{figure*}

\paragraph{Boundary integral equation solve}
In this experiment, we verify the claim that our method is able to improve the runtime of algorithms solving boundary integral equations. 
We solve a Helmholtz scattering problem with sound-hard scatterers. 
Let $\Omega \subset \bbR^2$ denote the union of scatterers, which have piecewise smooth boundary $\partial \Omega$. 
We use the same starfish-shaped scatterers as in the previous 2D scaling experiment.
We compute a solution $u(x)$ of the following partial differential equation:
\begin{align}\label{eq:helm_PDE}
\left\{ 
    \begin{aligned}
         \Delta u(\bx) + k^2 u(\bx) &= 0, &&\quad \bx \in \bbR^2 \backslash \Omega, \\
        \partial_{\bn} u(\bx) &= g(\bx), &&\quad \bx \in \partial \Omega, \\
        \sqrt{r} \left( \frac{\partial u}{\partial r} - ik u\right) &\to 0, &&\quad r = \|\bx\| \to \infty.
    \end{aligned}
\right.
\end{align}
We solve this problem using the single layer representation 
\begin{equation*}
    u(\bx) = \int_{\partial\Omega} G_k(\bx,\by) \sigma(\by)d\by,
\end{equation*}
where $G_k(\bx,\by) = \frac{i}4H_0^{(1)}(\|\bx-\by\|)$ is the 2D Helmholtz Green's function. This representation solves \cref{eq:helm_PDE} provided $\sigma$ solves the integral equation
\begin{equation}\label{eq:helm_IE}
    -\frac{\sigma(\bx)}{2} + \int_{\partial\Omega} \partial_{\bn(\bx)}G_k(\bx,\by) \sigma(\by)d\by= g(\bx)
\end{equation}
on $\partial\Omega$. 
To generate a variety of problem sizes in this setting, we add more and more starfish-shaped scatterers to the domain. The addition of each scatterer both increases the number of discretization points, and increases the conditioning of the problem as multiple-scattering effects are increased. 

In this experiment, we use both \texttt{PCFFT} and \texttt{FMM2D} to accelerate the application of the boundary integral operator \cref{eq:helm_IE}, and solve the resulting linear system iteratively using GMRES. 
We also measure the runtime of \texttt{RSKELF}, a recursive skeletonization based direct solver which computes a factored inverse of the operator in \cref{eq:helm_IE}. 
\Cref{fig:scattering_comparison} shows the results of this scaling experiment. 
At $\npts = 13{,}168$, the iterative methods require $526$ GMRES iterations and \texttt{PCFFT} is faster than the direct solver. 
At the two largest problem sizes, the direct solver computes the solution faster than both iterative methods, which require $804$ and $916$ GMRES iterations on these problem sizes, respectively.
\begin{figure}
    \centering
    \begin{subfigure}[t]{0.44\textwidth}
    \centering
    \includegraphics[width=\textwidth]{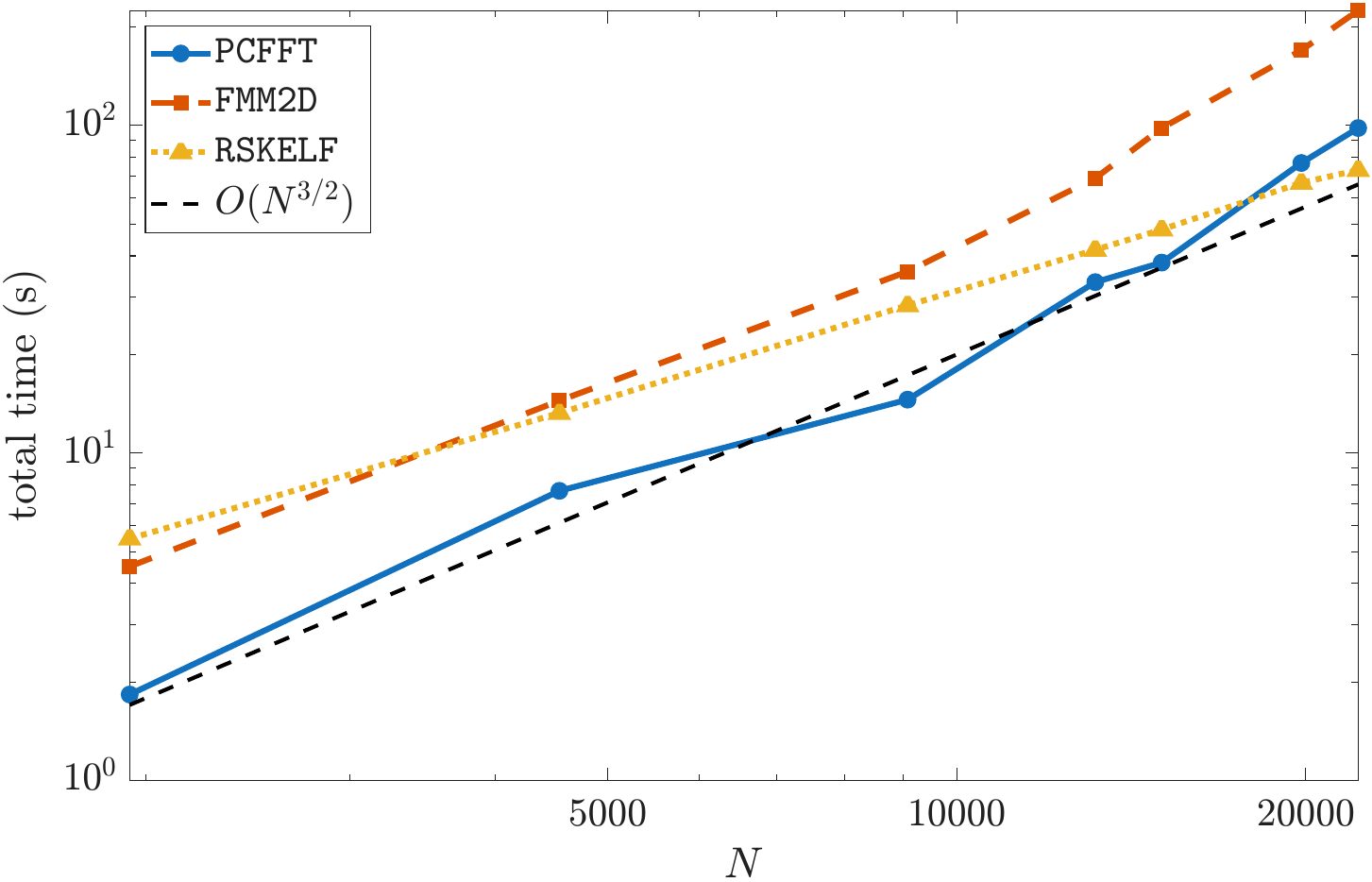}
    \caption{Timing the BIE solve}
    \label{subfig:bie_solve_times}
    \end{subfigure}
    \begin{subfigure}[t]{0.51\textwidth}
    \centering
    \includegraphics[width=\textwidth]{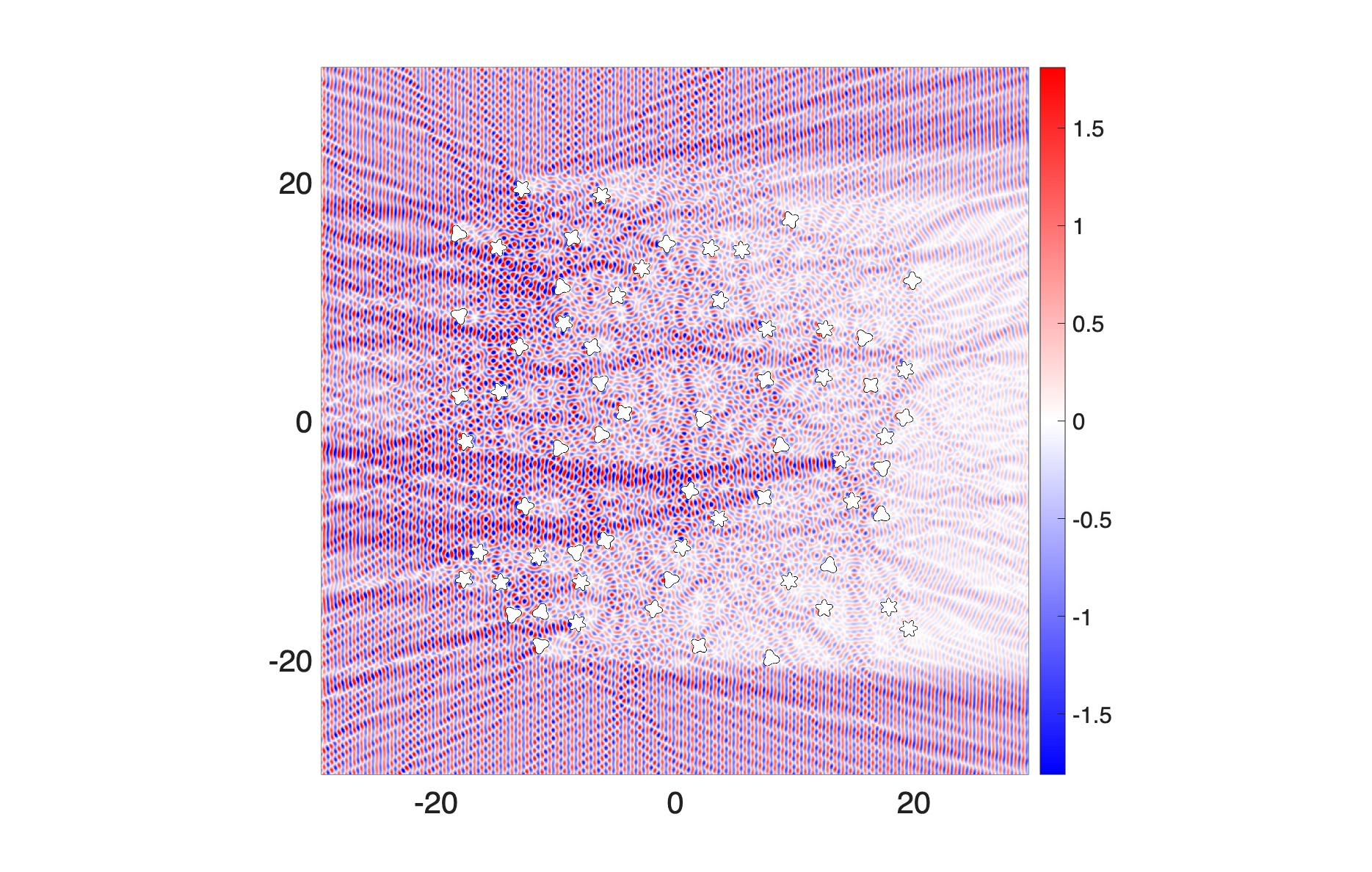}
    \caption{Solution $u$.}
    \label{subfig:soln}
    \end{subfigure}
    \caption{Comparing the solution times of a BIE formulation of a Helmholtz scattering problem. \Cref{subfig:bie_solve_times}: total time to solve the boundary integral equation \cref{eq:helm_IE}. All methods use all 16 cores on a Macbook Pro M4. \Cref{subfig:soln}: visualizing the imaginary part of the solution at the largest problem size. Different problem sizes are generated by adding starfish scatterers to the domain.}
    \label{fig:scattering_comparison}
\end{figure}
\section{Examples}\label{sec:numerics}
We now demonstrate the our algorithm on a few applications, focusing on integral equation settings where Fourier analysis based fast $N$-body algorithms are unavailable.

\subsection{Eigendecomposition of the flexural clamped plate biharmonic problem}
We use the kernels introduced in \cref{sec:der} to find eigenvalues and eigenfunctions of the biharmonic operator with clamped plate boundary conditions. 
That is, for a given 2D domain $\Omega$, we search for pairs $k$ and $u$ which solve \cref{eq:biharmonic}.
We use three domains $\Omega$, corresponding to the shape of the A-23 iceberg, which was pushed into the Weddell Sea and was suddenly broken apart by storms between July and September, 2025. We construct $\Omega_1, \Omega_2$, and $\Omega_3$ by tracing high-resolution satellite imagery of the iceberg in July, August, and September of 2025, respectively \cite{Doris_2025,Doris_2025a,Doris_2025b}. 
This experiment gives us insight to how the eigenfunctions and eigenvalues change with the changing iceberg shape.

A popular class of methods for finding these eigenvalues involve looking for poles of the function 
\begin{align*}
    S(k) = 1/(f^* F_k^{-1} g),
\end{align*}
where $F_k$ is a discrete system matrix of the system at wavenumber $k$. Methods of this form include contour integration methods, such as those developed in \cite{beyn2012integral,colbrook2025avoiding}, and methods that directly look for roots of $S(k)$ \cite{bruno_evaluation_2026,effenberger2012chebyshev} and related functions (see \cite{boyd2002computing,zhao2015robust}). In this example, we use the later method, building a Chebyshev expansion of $S(k)$. For $F_k$ is the boundary integral operator corresponding to the clamped plate biharmonic problem in the left-hand side of \cref{eq:biharmonic}, with wavenumber $k$. The operator
$F_k$ will be the map from auxiliary densities to boundary data $g = (g_1, g_2)$. 
Thus, $F_k^{-1}$ maps from boundary data to auxiliary densities defining the solution. 
If $k$ is an eigenvalue of the operator, the resonance will cause a pole in $S(k)$.

To find the poles of $S(k)$, we set a search interval $k \in [12,16]$, discretize the interval with $80$ Chebyshev points, and evaluate $S(k)$ at these points.
At each value of $k$, we compute $F_k^{-1}g$ by solving the resulting boundary integral equation iteratively using GMRES, where the matrix-vector products are accelerated using \texttt{PCFFT}.
Given samples of $S(k)$ at Chebyshev points, we fit a Chebyshev series for $1/S(k)$ and find the zeros of this function. 
The roots of $1/S(k)$ are poles of $S(k)$ and, equivalently, resonant wavenumbers of \cref{eq:biharmonic}.
Once we have found an eigenvalue $k$, we find an auxiliary density in the nullspace of $F_{k}^{-1}$. 
Because we are at a resonance, we expect to incur a large iteration complexity to solve this problem, so we use the \texttt{RSKELF} direct solver, implemented in FLAM \cite{ho_flam_2020}.

We conduct the search for eigenvalues and eigenfunctions using three iceberg geometry shapes. \Cref{tab:eigenvalues} contains statistics of this experiment.
\Cref{fig:eigenfunctions} shows the recovered eigenfunctions on the three different geometries. Even with a relatively un-optimized algorithm for finding roots of $S(k)$, we are able to solve these nonlinear eigenvalue problems quite efficiently.
\begin{table}[h!]
    \centering
    \begin{tabular}{lllllll}
    \toprule
        Geometry & Eigenvalues & Search interval & $\npts$ & $n_{\text{iter.}}$ & $T_{\text{res.}}$ & $T_{\text{dir.}}$ \\
        \midrule
        $\Omega_1$ & $\{13.02, 14.10, 15.16, 15.97\}$ & $[12, 16]$ & $4{,}928$ & $48.7$ & $11.9$ & $32.9$  \\
        $\Omega_2$ & $\{12.39, 14.89, 15.87 \}$ & $[12, 16]$ & $7{,}104$ & $56.5$ & $17.4$  & $92.6$  \\
        $\Omega_3$ & $\{13.71, 14.33\}$ & $[12, 16]$ & $3{,}280$ & $59.6$ & $9.4$ & $27.5$ \\
        \bottomrule
    \end{tabular}
    \caption{Statistics of the eigendecomposition experiment. 
    For each geometry, the eigenvalues found by our search are shown in the second column.
    The third column shows the interval over which we searched for eigenvalues.
    The remaining columns are $\npts$: number of discretization points along the boundary, $n_{\text{iter.}}$: average number of GMRES iterations needed to solve each clamped plate boundary value problem, $T_{\text{res.}}$: average time (seconds) to solve each bounday integral equations when sampling the resolvent $S(k)$, and $T_{\text{dir.}}$: average time (seconds) required by the direct solver to compute each eigenfunction.
    Visualizations of the eigenfunctions are provided in \cref{fig:eigenfunctions}.
    }
    \label{tab:eigenvalues}
\end{table}

\begin{figure}
    \centering
    \begin{subfigure}[t]{0.19\textwidth}
        \centering
        \includegraphics[width=\textwidth]{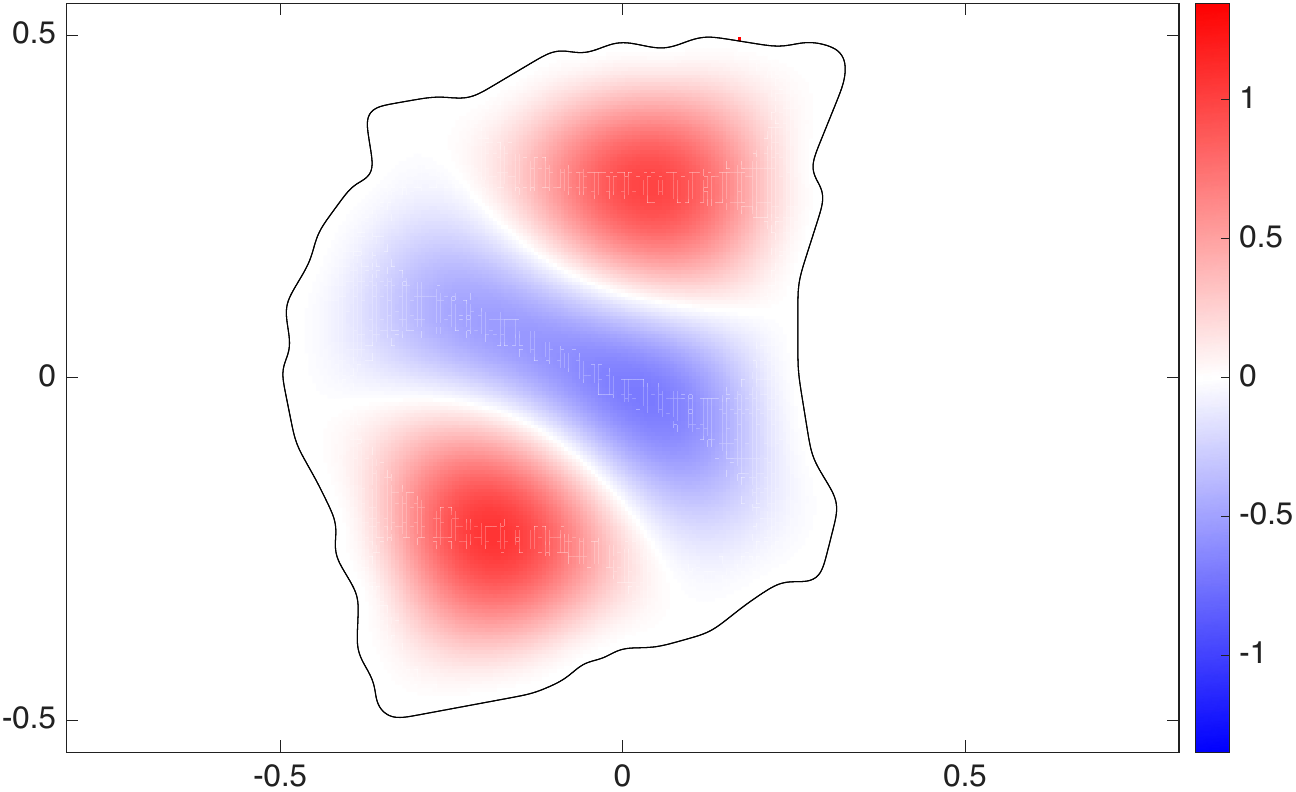}
        \caption{$\Omega_1$, $k=13.02$}
        \label{fig:eigenfunctions_a}
    \end{subfigure}
    \hfill
    \begin{subfigure}[t]{0.19\textwidth}
        \centering
        \includegraphics[width=\textwidth]{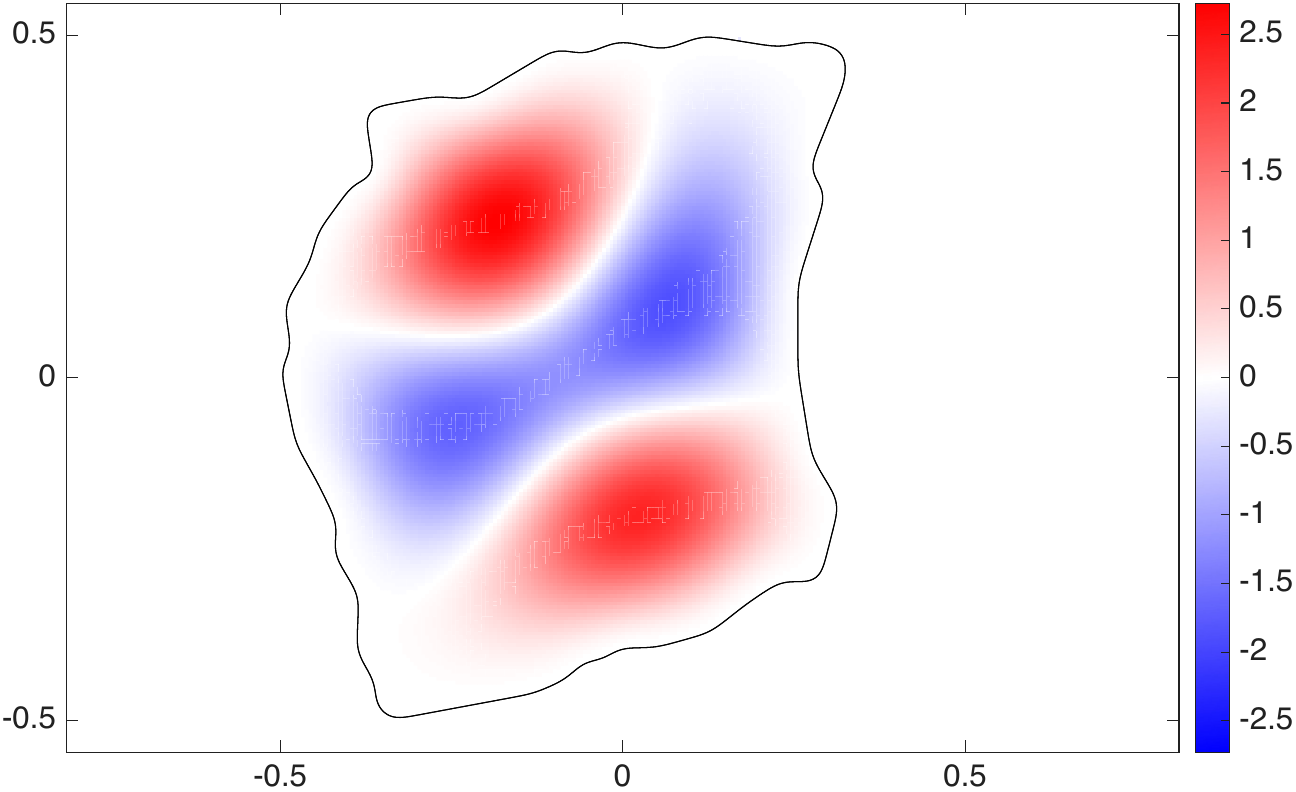}
        \caption{$\Omega_1$, $k=14.10$}
        \label{fig:eigenfunctions_b}
    \end{subfigure}
    \hfill
    \begin{subfigure}[t]{0.19\textwidth}
        \centering
        \includegraphics[width=\textwidth]{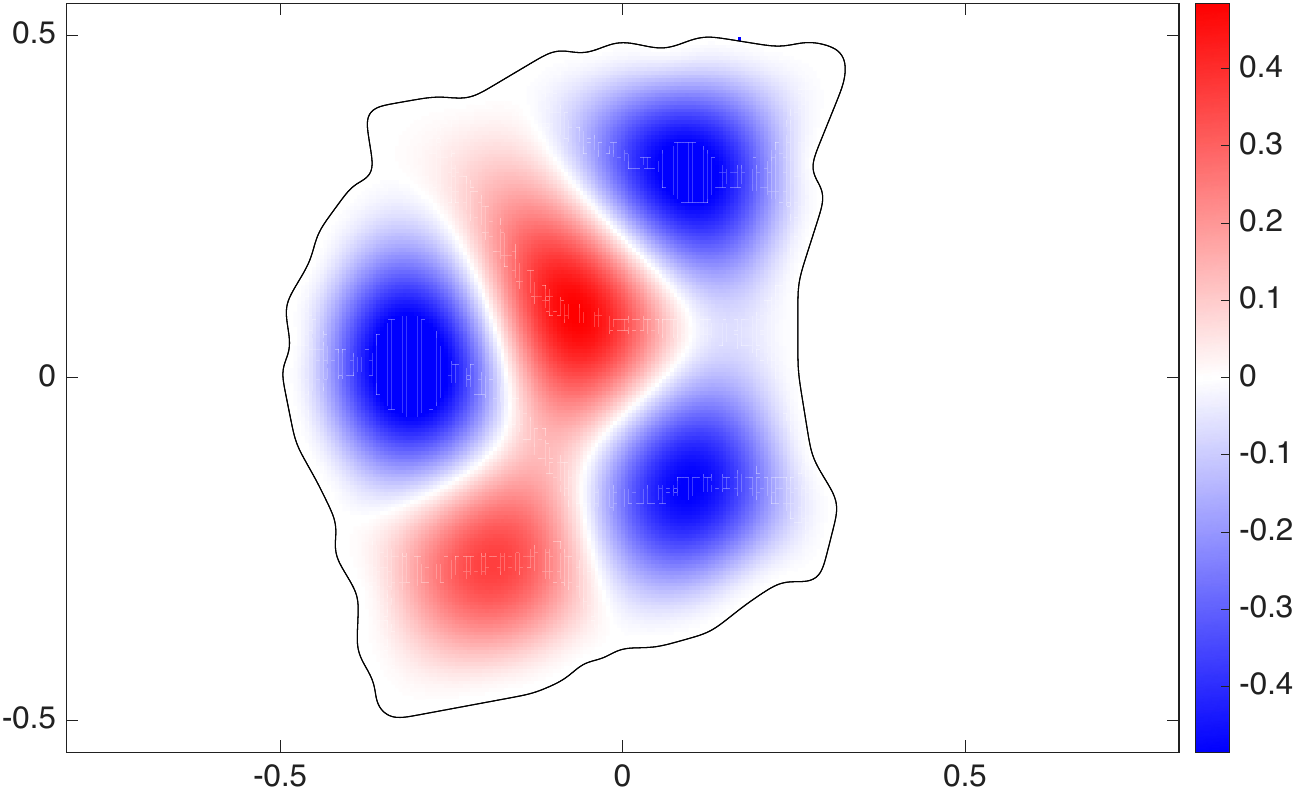}
        \caption{$\Omega_1$, $k=15.16$}
        \label{fig:eigenfunctions_c}
    \end{subfigure}
    \hfill
    \begin{subfigure}[t]{0.19\textwidth}
        \centering
        \includegraphics[width=\textwidth]{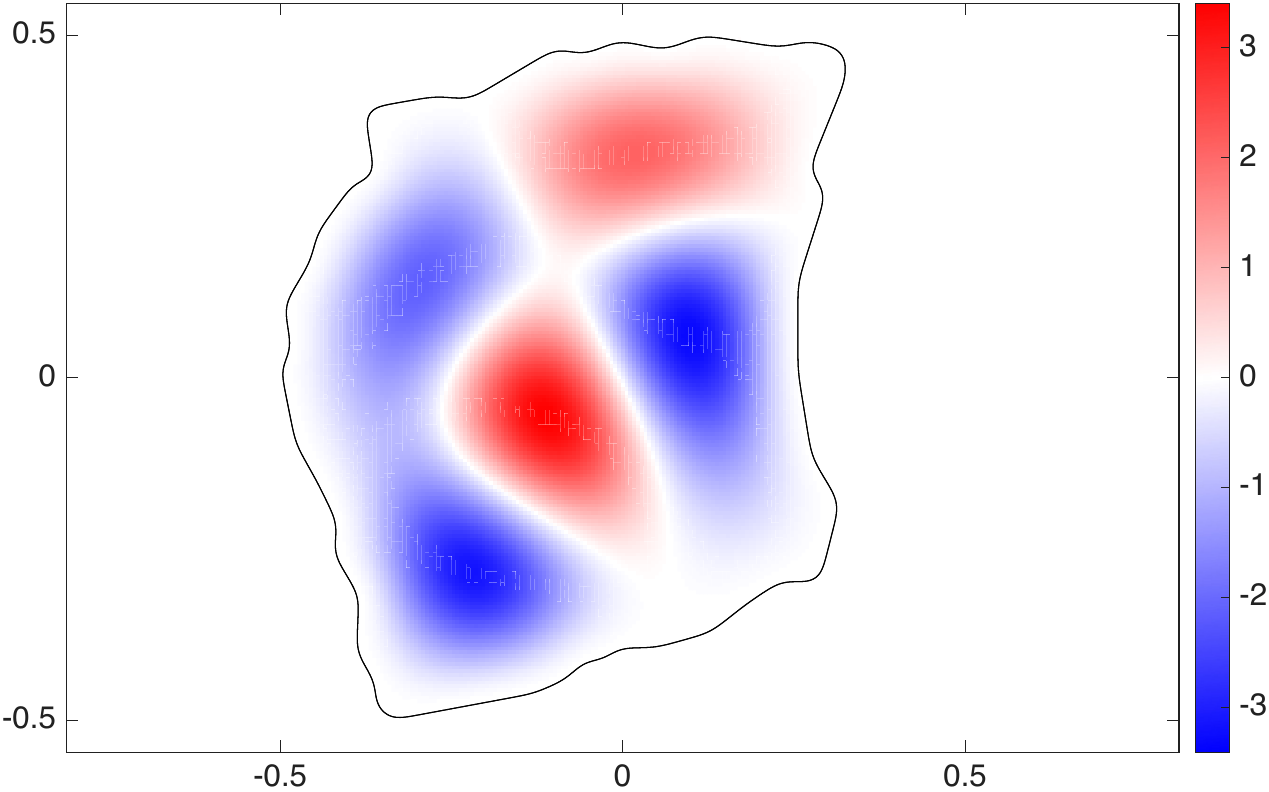}
        \caption{$\Omega_1$, $k=15.97$}
        \label{fig:eigenfunctions_d}
    \end{subfigure}

    \vspace{0.5em}
    \begin{subfigure}[t]{0.19\textwidth}
        \centering
        \includegraphics[width=\textwidth]{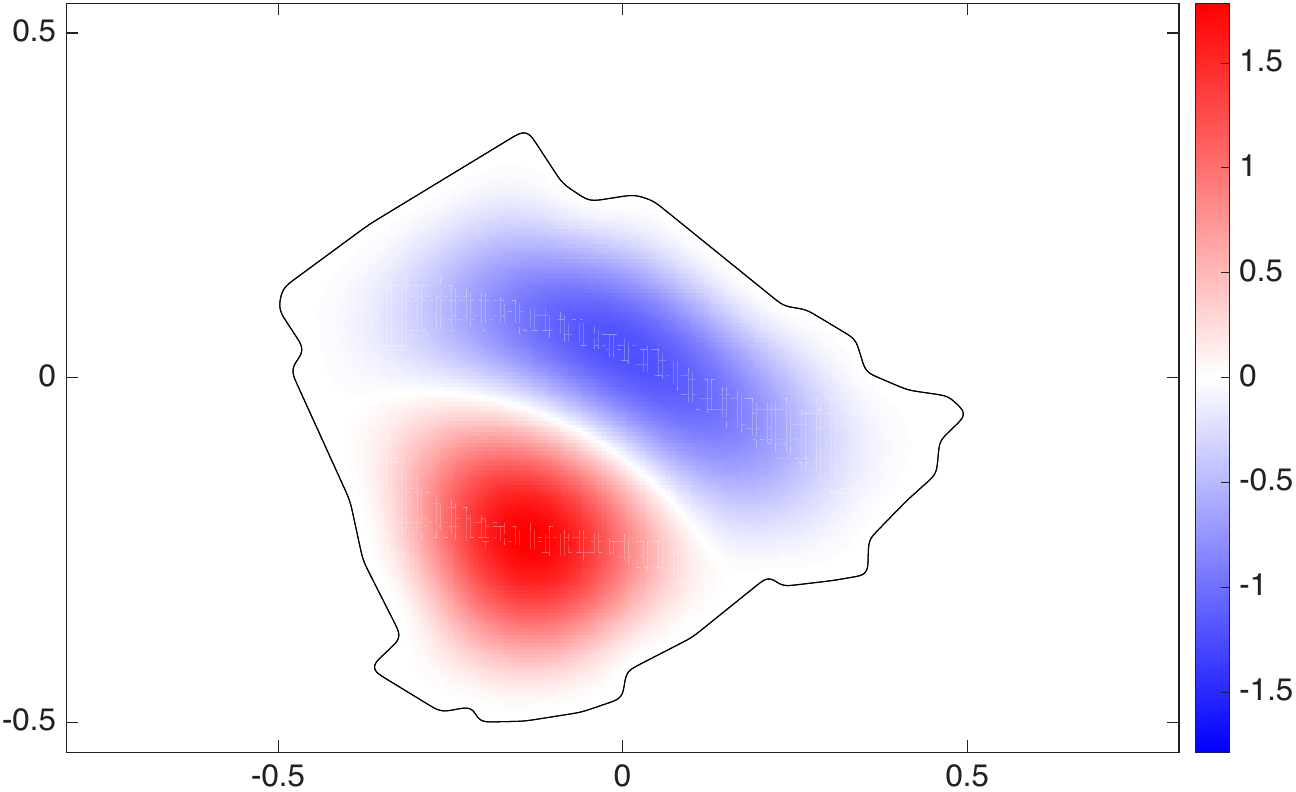}
        \caption{$\Omega_2$, $k=12.39$}
        \label{fig:eigenfunctions_e}
    \end{subfigure}
    \hfill
    \begin{subfigure}[t]{0.19\textwidth}
        \centering
        \includegraphics[width=\textwidth]{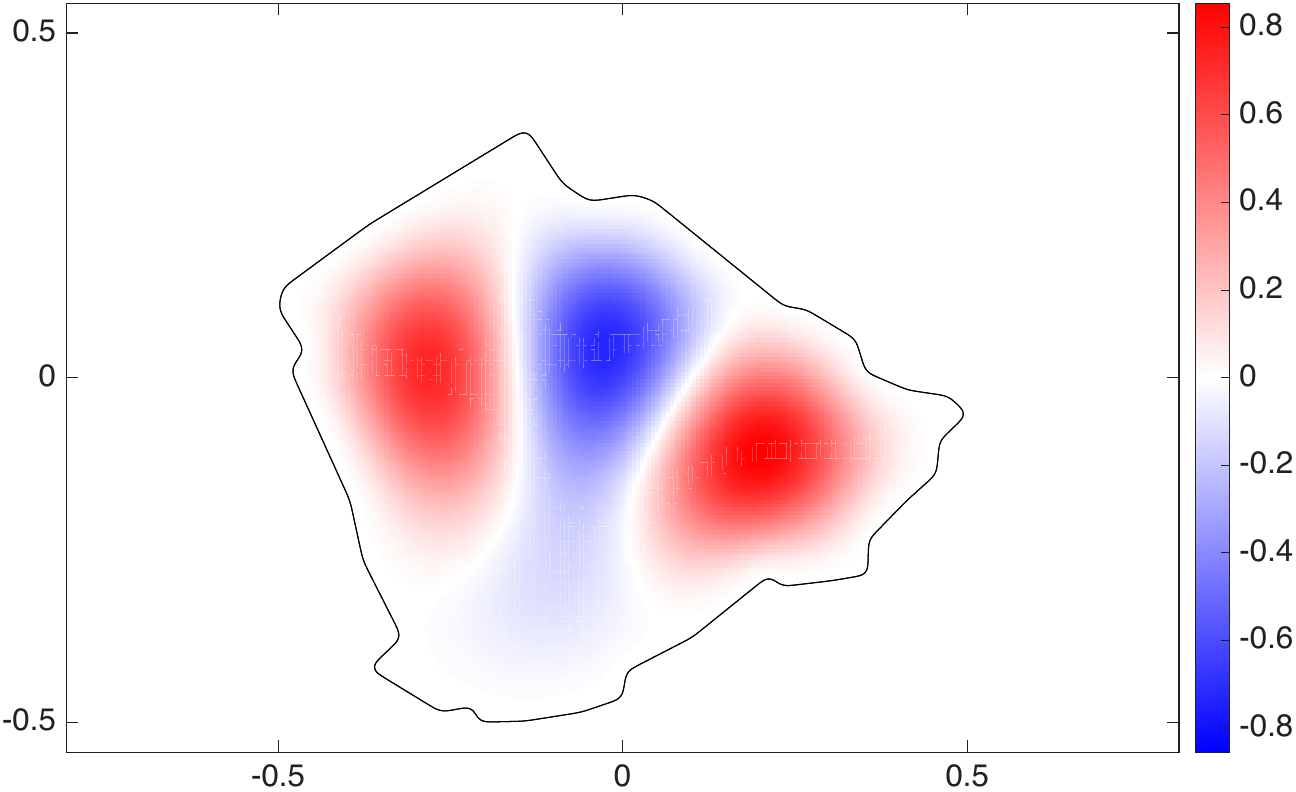}
        \caption{$\Omega_2$, $k=14.89$}
        \label{fig:eigenfunctions_f}
    \end{subfigure}
    \hfill
    \begin{subfigure}[t]{0.19\textwidth}
        \centering
        \includegraphics[width=\textwidth]{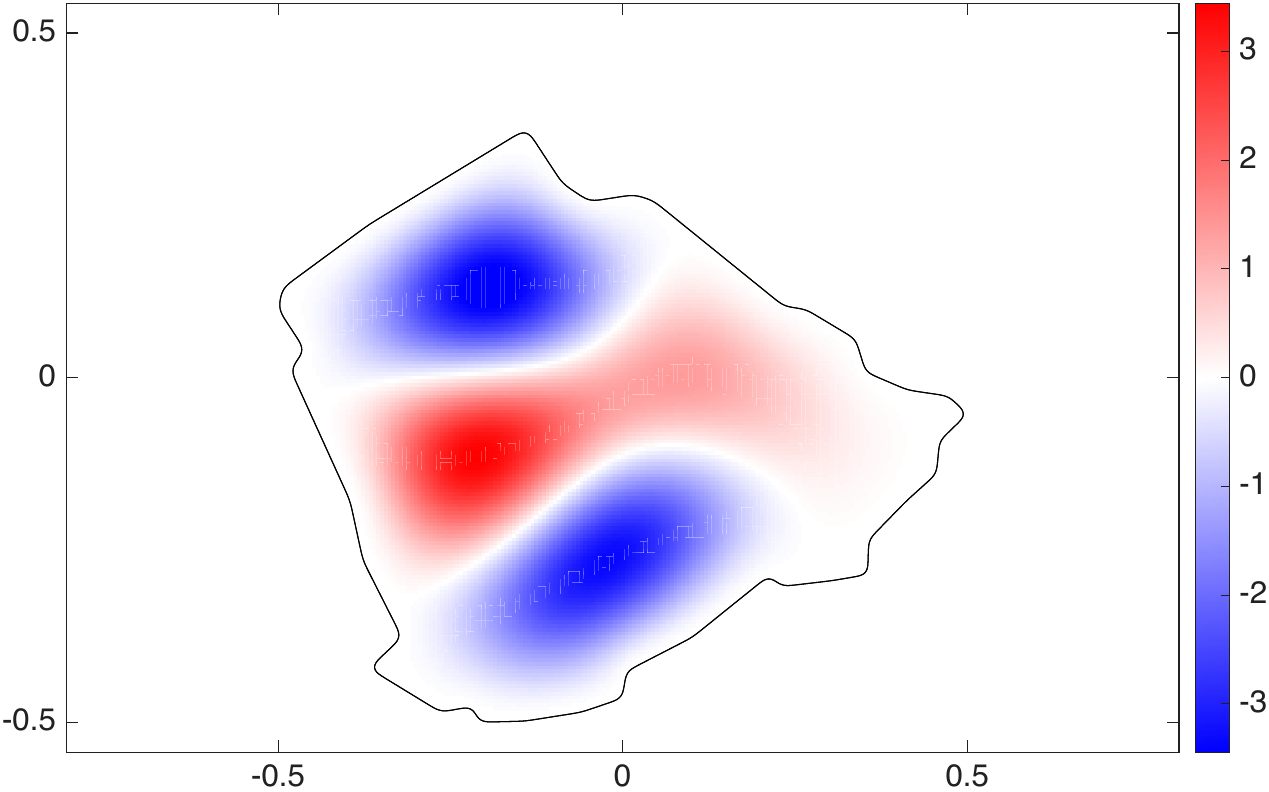}
        \caption{$\Omega_2$, $k=15.87$}
        \label{fig:eigenfunctions_g}
    \end{subfigure}
    \hfill
    \begin{subfigure}[t]{0.19\textwidth}
        \centering
        \includegraphics[width=\textwidth]{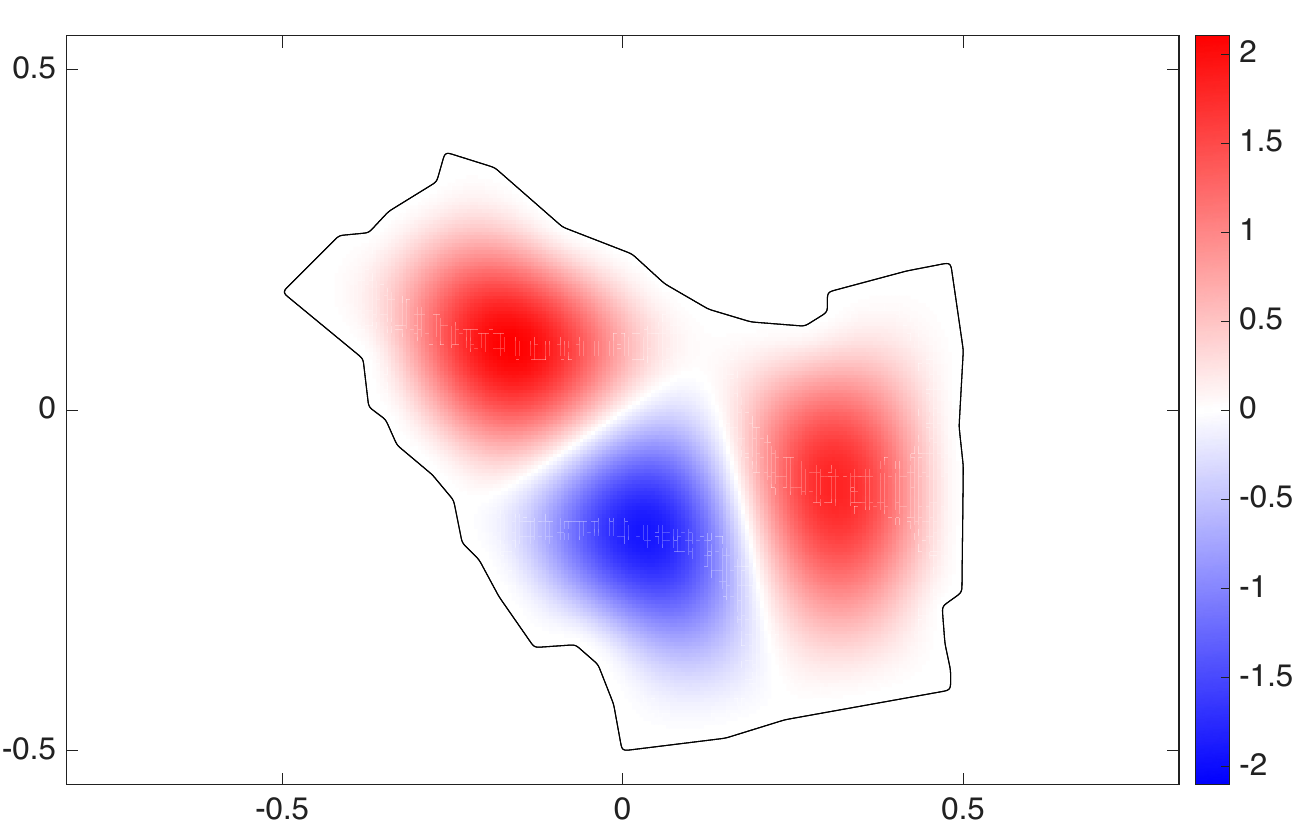}
        \caption{$\Omega_3$, $k=13.71$}
        \label{fig:eigenfunctions_h}
    \end{subfigure}
        \hfill
    \begin{subfigure}[t]{0.19\textwidth}
        \centering
        \includegraphics[width=\textwidth]{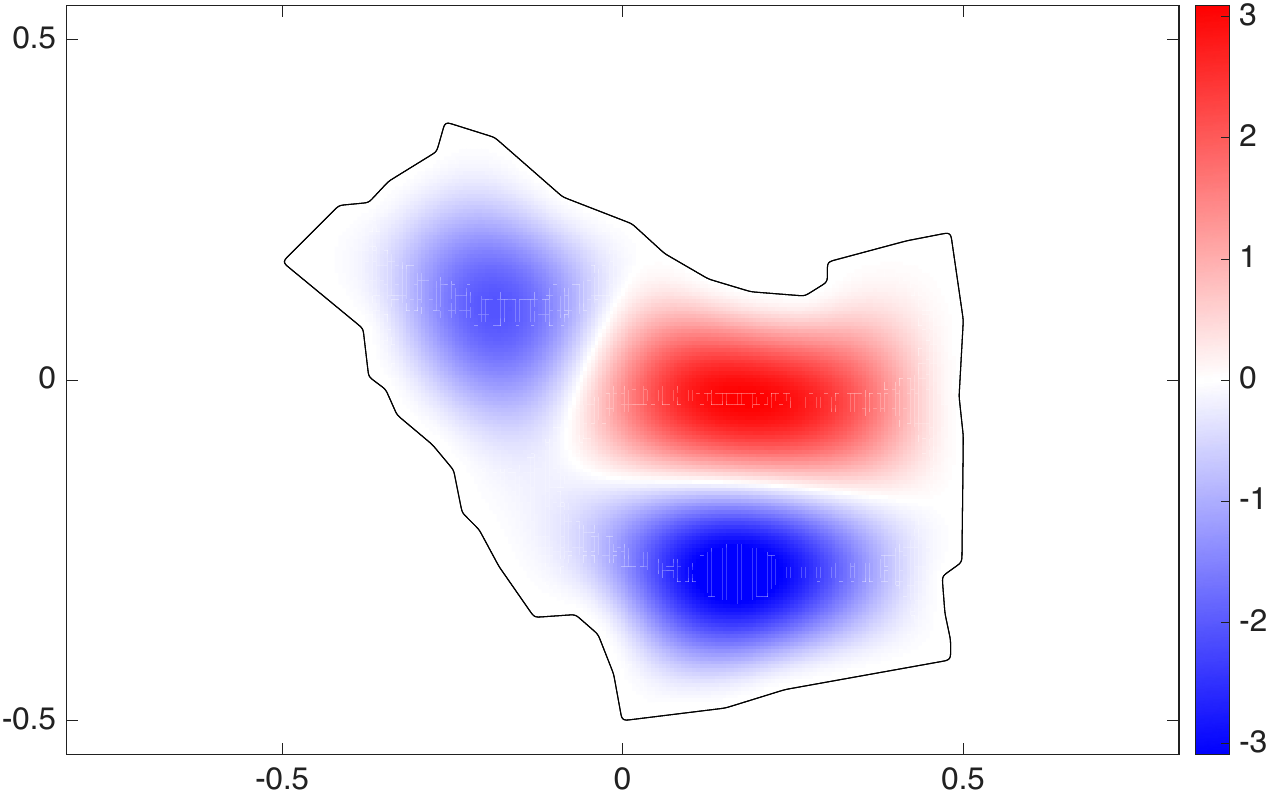}
        \caption{$\Omega_3$, $k=14.33$}
        \label{fig:eigenfunctions_i}
    \end{subfigure}
    \caption{Eigenfunctions of the clamped plate biharmonic operator on three different iceberg geometries. 
    Each subfigure shows an individual eigenfunction.
    As solutions of \cref{eq:biharmonic} with zero boundary data, both $u$ and $\partial_{\bn} u$ must vanish on the boundary, and $\Delta^2 u$ must hold on the interior.
    Statistics of the computation required to find these eigenfunctions are described in \cref{tab:eigenvalues}.
    }
    \label{fig:eigenfunctions}
\end{figure}

\subsection{Surface Helmholtz}
As our next example, we solve the Helmholtz-Beltrami problem on the ``chainlink torus'' surface $\Gamma$ depicted in \cref{fig:var_helmbel}:
\begin{equation}\label{eq:helmbel}
    (\Delta_\Gamma + k(\bx)^2)u(\bx) = f(\bx) \quad\text{on }\Gamma,
\end{equation}
where $\Delta_\Gamma$ is the Laplace-Beltrami operator. We solve this equation using a modification of the parametrix method introduced in \cite{goodwill_parametrix_2026}. In short, we suppose that 
\begin{equation*}
    u(\bx) = \int_\Gamma \frac{1}{2\pi}\log(\bx-\by) \sigma(\by) \, d\by,
\end{equation*}
for some unknown density $\sigma$. The calculations in \cite{goodwill_parametrix_2026} show that this $u$ will solve the Helmholtz-Beltrami equation provided
\begin{equation}
    \sigma(\bx)+\int_\Gamma R(\bx,\by)\sigma(\by)\,d\by = f(\bx)
\end{equation}
for all $\bx\in\Gamma$, where the remainder kernel is
\begin{equation*}
    R(\bx,\by) = \frac1{2\pi}\left[\lp\frac{\bn(\bx)\cdot(\bx-\by)}{\|\bx-\by\|^2}  \rp^2-2H(\bx) \frac{\bn(\bx)\cdot(\bx-\by)}{\|\bx-\by\|^2} + k^2(\bx) \log(\bx-\by)\right],
\end{equation*}
and $H(\bx)$ is the mean curvature of $\Gamma$. In \cite{goodwill_parametrix_2026}, the authors solve the same problem using a Hankel function with a characteristic frequency instead of the logarithm. We choose this representation as it is real-valued and requires a smaller $\nspread$. We discretize the integral equation using the collocation scheme \texttt{fmm3dbie} \cite{fmm3dbie}. In short, this package splits the surface into high order Koornwinder patches discretized with Vioreanu-Rokhlin nodes \cite{vioreanu2014spectra}. For every patch, the targets are split into a near-field where is special quadrature is used, and a far-field where a consistent oversampled smooth quadrature rule is used.  To apply the operators with our algorithm, we simply use the oversampled nodes as the sources and the original discretization as the target. We then add a sparse matrix of corrections to apply the near-field quadratures.

\begin{figure}[h!]
    \centering
        \includegraphics[width=0.7\textwidth]{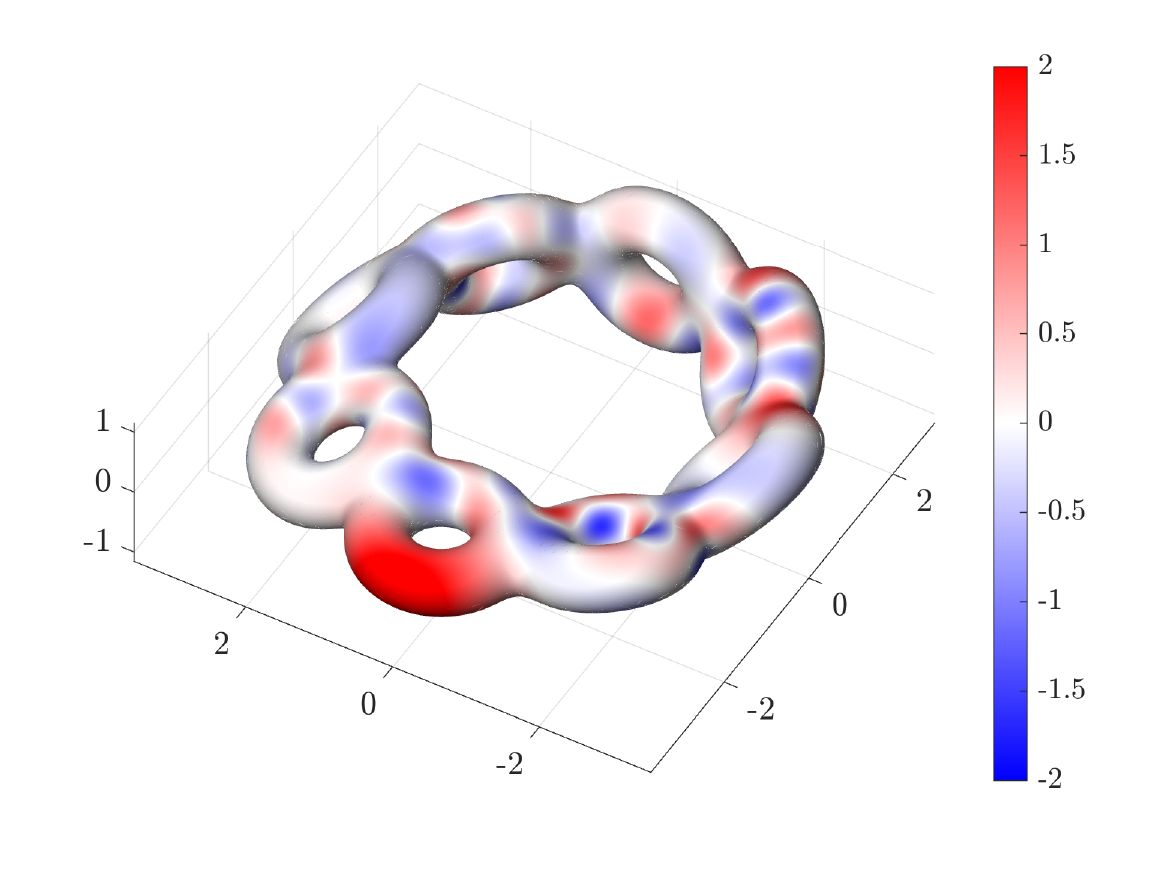}
    \caption{An example solution of the Helmholtz-Beltrami equation \cref{eq:helmbel}. The ``chainlink torus'' surface $\Gamma$ is a smooth surface triangulated with $7{,}797$ patches.}
    \label{fig:var_helmbel}
\end{figure}

For our experiment, we used $k(\bx)= 4-4\cos(2\bx_2)$; the source $f(\bx)$ and wavenumber $k(\bx)$ are shown in \cref{fig:helmbel_setup}. 
An analytic solution test indicates that the solution is accurate to 4 digits. 
The surface was discretized with $350{,}865$ points belonging to the $7{,}797$ $8^{\text{th}}$ order patches. It took $26$ seconds to generate the quadratures. For the far-field quadrature \texttt{fmm3dbie} oversamples the surface to $14^{\text{th}}$ order, resulting $674{,}924$ nodes. The precorrected-FFT precomputation with these sources and targets took $181$ seconds for the remainder and $180$ seconds for the parametrix. Each application of the system matrix took $1.50$ seconds. The solve required $365$ GMRES iterations which took $899$ seconds. Overall the entire solver used $48$ GB of memory.

\begin{figure}[h!]
    \centering
        \includegraphics[width=\textwidth]{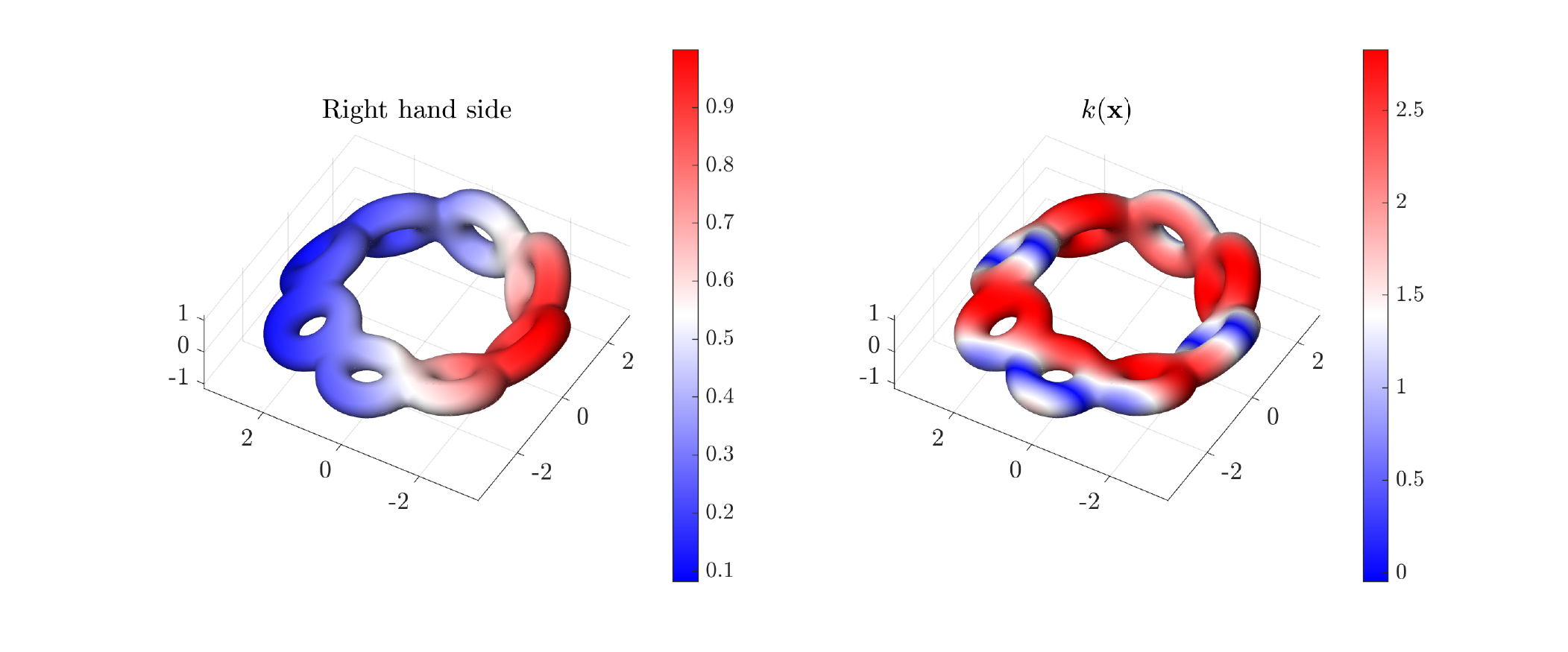}
    \caption{The example right hand side source function $f(\bx)$ (left) and wavenumber $k(\bx)$ (right) used to generate the Helmholtz-Beltrami solution in \cref{fig:var_helmbel}.}
    \label{fig:helmbel_setup}
\end{figure}

\subsection{Lens optimization}
As a final example, we consider a simple lens design problem inspired by \cite{vico2025designing}. In that work,  3D printing is used to build a lens out of a dielectric material with a variable density. In this example we consider a simpler 2D lens and model the field using a scalar Helmholtz equation.  We model this as a compact region $\Omega$ with a local wavenumber $k(\bx)=\sqrt{k_1^2 + q(\bx)}$ that is customizable between some maximum and minimum allowable values. Outside of the lens, we suppose that the field has wavenumber $k_0$.

Our goal will be to choose the wavenumber so that an incident planewave $u^{\rm{in}}$ is focused to yield the largest possible intensity at the focal point $\bx_0$. The total field can be split $u=u^{\rm{in}} + u^{\rm{sc}}$, where the scattered field satisfies the Sommerfeld radiation conditions and the equations
\begin{equation}\label{eq:lens_eq}
\left\{
    \begin{aligned}
        \Delta u^{\rm{sc}}(\bx) + k_0^2 u^{\rm{sc}}(\bx) &= 0, && \quad \bx \text{ in }\bbR^2\setminus\Omega,\\
        \Delta u^{\rm{sc}}(\bx) + (q(\bx)+k_1^2) u^{\rm{sc}}(\bx) &= -(q(\bx)+k_1^2 - k_0^2)u^{\rm{in}}(\bx),&& \quad \bx \text{ in }\Omega,\\
        [[u^{\rm{sc}}]](\bx)&=0, && \quad \bx \text{ on }\partial\Omega,\\
        [[\partial_{\bn} u^{\rm{sc}}]](\bx)&=0, && \quad \bx \text{ on }\partial\Omega,
    \end{aligned}
    \right.
\end{equation}
where $[[u^{\rm{sc}}]]$ and $[[\partial_{\bn} u^{\rm{sc}}]]$ are the jump in the scattered field and its normal derivative across $\partial \Omega$ respectively. This is equivalent to looking for the weak (i.e. $H^2_{\rm{loc}}$) solution of the equation 
\begin{align*}
(\Delta + k^2(\bx))u^{\rm{sc}}(\bx)=-(k(\bx)^2 - k_0^2)u^{\rm{in}}(\bx)
\end{align*}
in all of $\bbR^2$ that satisfies the Sommerfeld radiation conditions.
 We will work to find a $q(\bx)$ that optimizes the intensity at the focal point $\bx_0$:
\begin{equation*}
    \cF[q] := |u^{\rm{inc}}(\bx_0)+u^{\rm{sc}}(\bx_0)|^2
\end{equation*}
where $\bx_0$ is some point outside of $\Omega$. For this example, we choose $\Omega$ to be the interior of the squircle satisfying
\begin{align*}
    x_1^2+x_2^2 - 0.9^2 x_1^2x_2^2 = 1
\end{align*}
which is shown in \cref{fig:lens}. The focal point is chosen as $\bx_0 = (4, 0)$.
We represent $q$ using a global $6^{\text{th}}$ order tensor product Chebyshev basis, supported on the square that just contains the region $\Omega$. To improve the conditioning, we use a QR factorization (weighted by the square root of the quadrature weights in $\Omega$) to orthogonalize the basis on $\Omega$. 

To solve this optimization problem efficiently, we must efficiently compute the Fr\'echet derivative of the scattered field with respect to perturbations of $q$. If $D[u^{\rm{sc}}]p$ is the Fr\'echet derivative of $u^{\rm{sc}}$ with respect to a perturbation $q(\bx)\to q(\bx) + h p(\bx)$, it will be the outgoing weak solution of 
\begin{align*}
    (\Delta + k^2(\bx))(D[u^{\rm{sc}}]p)(\bx)=-p(\bx)(u^{\rm{in}}(\bx)+u^{\rm{sc}}(\bx))
\end{align*}
(see \cite{borges2017high}). The strong form of this equation is identical to \cref{eq:lens_eq} with a new volumetric source in $\Omega$. To simplify the computation, we introduce $w$, the weak solution of
$$ (\Delta+k^2(\bx)) w(\bx) = \delta(\bx-\bx_0)$$
(i.e. the field generated by a point source at the focal point $\bx_0$). A straightforward integration by parts argument \cite{borges2017high} then shows that
\begin{multline*}
    (D[u^{\rm{sc}}]p)(\bx_0) = v(\bx_0) = \int_{\bbR^2} (D[u^{\rm{sc}}]p)(\bx)\delta(\bx-\bx_0)\,d\bx = \int_{\bbR^2} (D[u^{\rm{sc}}]p)(\bx)(\Delta+k^2(\bx)) w(\bx)\,d\bx \\=\int_{\bbR^2} (\Delta+k^2(\bx))  (D[u^{\rm{sc}}]p)(\bx)w(\bx)\,d\bx =-\int_{\Omega} p(\bx)(u^{\rm{in}}(\bx)+u^{\rm{sc}}(\bx))w(\bx)\,d\bx.
\end{multline*}
The advantage of this approach is that we can compute the partial derivative of the field at $\bx_0$ with respect to all of the Chebyshev coefficients defining $q$ with two solves, one to find $u^{\rm{sc}}$ and one to find $w$, followed by one inner product per basis function. 
After some applications of the chain rule, this approach allows us to compute the gradient of the objective function with respect to the coefficients defining $q$.
This in turn allows us to optimize the objective function $\cF[q]$ using BFGS \cite{broyden1970convergence,fletcher1970new,shanno1970conditioning,goldfarb1970family} and a backtracking line search with the initial condition $q(\bx)\equiv 0$.

In order to solve \cref{eq:lens_eq} at each step, we represent the scattered field using a volume potential $\sigma$ supported in $\Omega$ and layer potentials $\eta$ and $\rho$ supported on $\partial\Omega$:
\begin{equation*}
    u^{\rm{sc}}(\bx) = \int_{\partial\Omega}G_{k_0}(\bx-\by) \eta(\by) \,d\by\\+ \int_{\partial\Omega}\partial_{\bn(\by)}G_{k_0}(\bx-\by) \rho(\by) \,d\by\quad \bx\in\bbR^2\setminus \Omega,
\end{equation*}
where $G_{k_0}(\bx)=\frac{i}4 H^{(1)}_0(k_0\|\bx\|
)$ is the Green's function for the exterior problem,
and
\begin{equation*}
    u^{\rm{sc}}(\bx) = \int_\Omega G_{k_1}(\bx-\by) \sigma(\by) \,d\by+ \int_{\partial\Omega}G_{k_1}(\bx-\by) \eta(\by) \,d\by\\+ \int_{\partial\Omega}\partial_{\bn(\by)}G_{k_1}(\bx-\by) \rho(\by) \,d\by\quad \bx\in \Omega,
\end{equation*}
where $G_{k_1}$ is the Green's function for the prototypical interior wavenumber. 
Plugging this representation into \cref{eq:lens_eq} yields a Fredholm second kind equation that can solved for $\sigma,\eta,\rho$. We then discretize $\Omega$ using \texttt{fmm3dbie} and $\partial \Omega$ using \texttt{chunkie}. We apply the operator quickly using two precorrected-FFTs, one at wavenumber $k_1$, and a small one involving only boundary nodes at wavenumber $k_0$. Since $k_0$ and $k_1$ are fixed, the quadrature corrections, spreading operators, and near-field corrections can be pre-computed and reused at each optimization step.

It is worth remarking that \cref{eq:lens_eq} can also be solved by using the much simpler representation $u^{\rm{sc}}(\bx) = \int_\Omega G_{k_0}(\bx-\by) \sigma(\by) \,d\by$ for all $\bx\in\bbR^2$, but we have found that our representation requires substantially fewer GMRES iterations for small $q$ (33 versus 66 iterations for $(k_0,k_1)=(4,6)$ and $q(\bx)\equiv-1$ in $\Omega$), likely because our representation uses a more representative wavenumber $k_1$ inside $\Omega$. We have found that the decreased number of iterations outweighs the increased cost associated with adding discretization points on the boundary and the second small precorrected-FFT per GMRES iteration.

With our integral equation solver in hand, we run our BFGS optimizer to find an improved lens with $k_0=4$, $k_1=6$, and $q$ restricted to live in $[-10,10]$. We stopped the optimization after 132 iterations, when the norm of the gradient first dropped below $10^{-3}$.
The resulting field intensity and wavenumber are shown in \cref{fig:lens}. The intensity at $\bx_0=(4,0)$ is $|u^{\rm{in}}(\bx_0)+u^{\rm{sc}}(\bx_0)|^2=5.1$. For comparison, the best constant wavenumber $k\in[5,7]=k_1+[-1,1]$ was $5$, which generated an intensity of $4.79$. The discretization has $20{,}138$ unknowns and used an average of 60.2 GMRES iterations per solve with a tolerance of $10^{-9}$. The total time to solve the optimization problem was $2{,}041$ seconds.

\begin{figure*}[t!]
    \centering
    \begin{subfigure}[t]{0.45\textwidth}
        \centering
        \includegraphics[width=\textwidth]{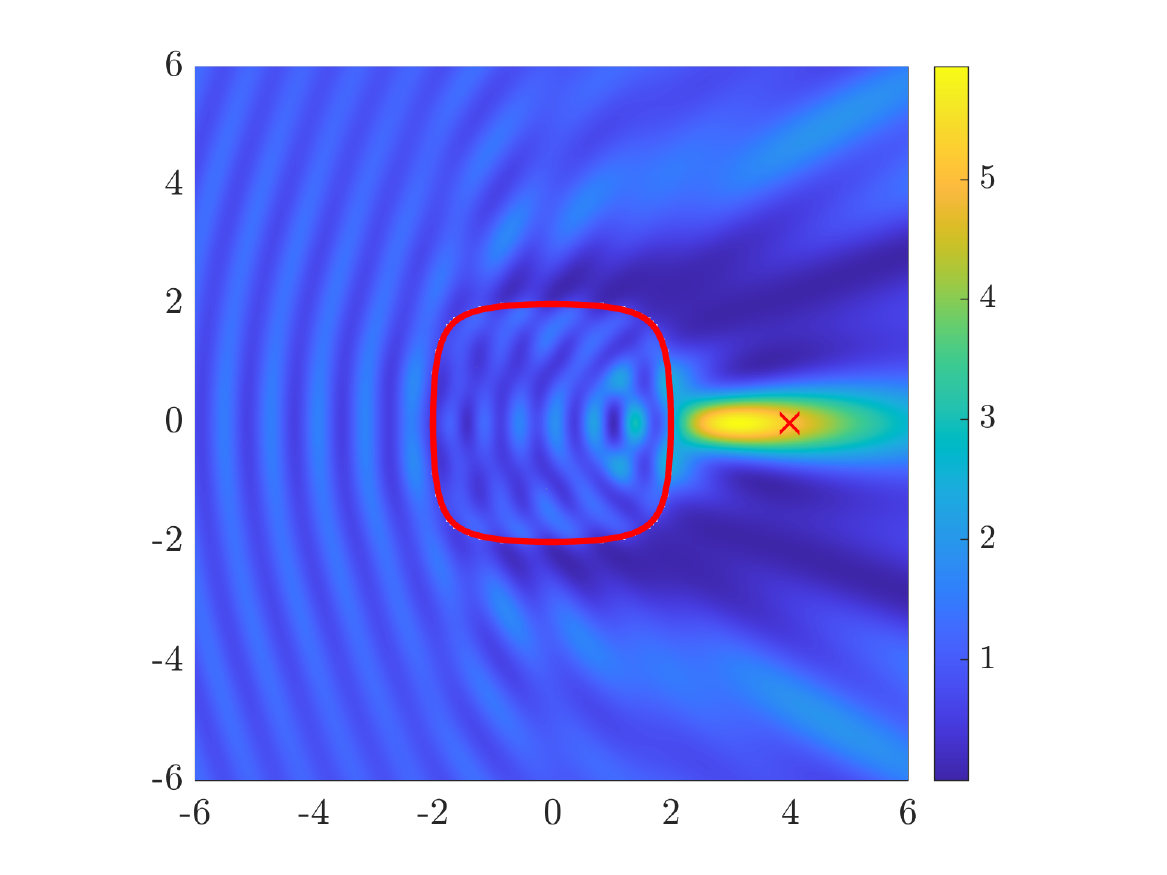}
        \caption{The field intensity $|u^{\rm{in}}+u^{\rm{sc}}|^2$.}
        \label{subfig:lens_u}
    \end{subfigure}
    ~
    \begin{subfigure}[t]{0.45\textwidth}
        \centering
        \includegraphics[width=\textwidth]{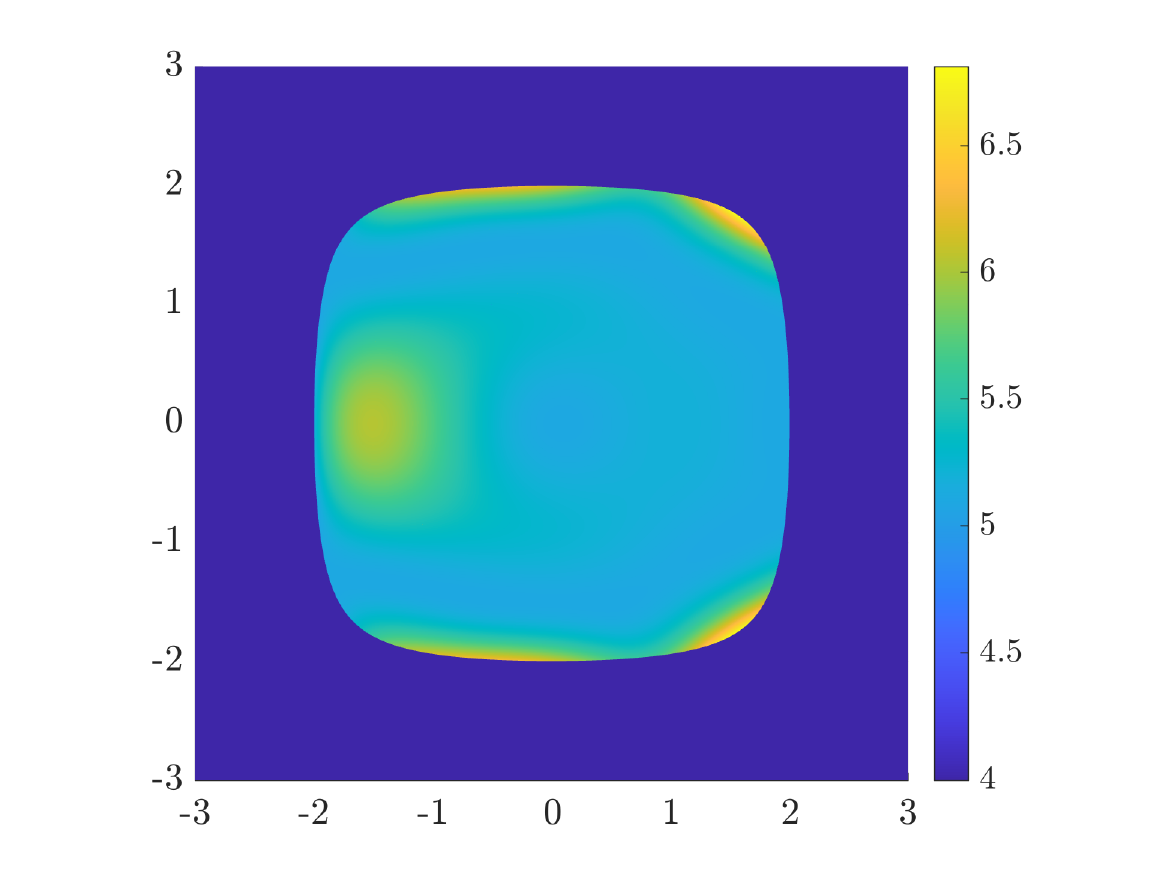}
        \caption{The wavenumber $k(\bx)$.}
        \label{subfig:lens_q}
    \end{subfigure}
    \caption{The field intensity $|u^{\rm{in}}+u^{\rm{sc}}|^2$ (\ref{subfig:lens_u}) and wavenumber $k(\bx)$ (\ref{subfig:lens_q}) for the lens chosen to maximize the intensity at $\bx_0=(4,0)$ when $u^{\rm{in}}$ is a plane wave coming from the left.}
    \label{fig:lens}
\end{figure*}

\section{Conclusion}\label{sec:conc}
In this work, we present a modification of the precorrected-FFT method that allows it to be used for a generic kernels. The method uses a series of proxy shells to build spreading operators that map point charges to equivalent charges on a grid. We also present and test an algorithm for systematically choosing the proxy points and spreading grids without the need for analytical knowledge of the kernel. The method is therefore flexible and can trivially be applied to new problem. To demonstrate the effectiveness of the new flexible version of the precorrected-FFT, we use it to solve accelerate integral equation approach for a variety of PDE problems. 

So far, this work has been limited to  scalar-valued kernels. While the presented algorithm should extend directly to vectored-valued problems, such as those considered in \cite{tissari_precorrected-fft_2003}, efficient implementations remains an open question. In order to offset the extra cost for these problems, we plan to explore GPU implementations of the algorithm.

\section*{Acknowledgments}
The authors would like to thank Jeremy Hoskins, Travis Askham, and Manas Rachh for many helpful discussions on the algorithm and for suggesting that we complete this work. They would also like to thank Shidong Jiang for his suggestions of good benchmark problems and Alex Barnett and Vicente Gomez Herrera for help conversations. T. Goodwill was was supported in part by an AMS-Simons Travel Grant.
The Flatiron Institute is a division of the Simons Foundation.

\bibliography{ref.bib}

\end{document}